\documentclass[11pt]{article}
\usepackage{amsmath}
\usepackage{amssymb}
\usepackage{amsthm}
\usepackage{mathtools}

\usepackage[usenames,dvipsnames]{color}
\usepackage[%
   breaklinks,
   colorlinks=true,
   citecolor=blue,
   linkcolor=red,
   pagebackref=false,
   pdfauthor={V. Strauss, M. Winklmeier},
   pdftitle={On the one-dimensional harmonic oscillator with a singular perturbation},]{hyperref}

\parindent 0pt
\addtolength{\textwidth}{1cm}

\newcommand\mylabelenumi[1]{{\upshape (#1)}}
\newcommand\enumiref[1]{\mylabelenumi{\ref{#1}}}

\def\s-lim{\mathop{\rm s-lim\,}}

\def\dim{\mathop{\rm dim\,}}

\def\supp{\mathop{\rm supp\,}}

\DeclareMathOperator{\linspan}{span}

\DeclareMathOperator{\im}{Im}

\newcommand{\e}{\mathrm{e}}
\newcommand{\I}{\mathrm{i}}
\newcommand{\rd}{\mathrm{d}}
\newcommand{\C}{\mathbb C}
\newcommand{\N}{\mathbb N}
\newcommand{\R}{\mathbb R}
\newcommand{\FA}{\mathfrak A} 	
\newcommand{\dop}{A} 		
\newcommand{\cop}{C} 		
\newcommand{\Bop}{B} 		
\newcommand{\mD}{\mathcal D}

\theoremstyle{theorem}
\newtheorem{theorem}{Theorem}
\newtheorem*{theorem*}{Theorem}
\newtheorem{corollary}[theorem]{Corollary}
\newtheorem*{corollary*}{Corollary}
\newtheorem{proposition}[theorem]{Proposition}
\newtheorem{lemma}[theorem]{Lemma}
\newtheorem*{lemma*}{Lemma}

\theoremstyle{definition}
\newtheorem{remark}[theorem]{Remark}
\newtheorem*{remark*}{Remark}

\hyphenation{pro-jec-tio-nally stric-tly decom-po-si-ti-ons
pro-po-si-tion eve-ry-where pro-jec-tion sub-space de-fi-ni-tion
pe-cu-li-a-ri-ties co-m-p-lex}
\begin{document}

\title{On the one-dimensional harmonic oscillator with a singular perturbation}
\author{}\date{\today}\maketitle
\abstract{
In this paper we investigate the one-dimensional harmonic oscillator with a singular perturbation concentrated in one point.
We describe all possible selfadjoint realizations and we show that for certain conditions on the perturbation exactly one negative eigenvalues can arise. This eigenvalue tends to $-\infty$ as the perturbation becomes stronger.}

\section{Introduction} 
The one-dimensional harmonic oscillator is given by the formal differential expression
\begin{align*}
   \FA f(t) := \left( -\frac{1}{2} \frac{\rd^2}{\rd t^2} + \frac{1}{2} t^2 \right)f(t).
\end{align*}

The aim of this paper is to investigate several possible realisations of $\FA$ as a symmetric linear operator and determine all possible selfadjoint extensions.

By the Liouville-Green asymptotic formula \cite[Theorem 2.2.1]{eastham}, it is known that there are solutions $y_\pm$ of $\FA y=\lambda y$ with the following asymptotic behaviour for $|t|\to\infty$:
\begin{equation}
   \label{eq:assymptotic}
   y_\pm(t) \sim \frac{1}{ (t-2\lambda)^{\frac{1}{2}} }
   \exp\Big( \pm\int_a^t \sqrt{ s^2 - 2\lambda + \frac{1}{4} \Big( \frac{ s }{s^2-2\lambda}\Big)^{2} }\ \rd s\Big)
\end{equation}
for $|a|$ large enough.
This shows immediately that $\FA$ is in the limit point case both at $+\infty$ and $-\infty$.

In order to assign an operator to the differential expression $\FA$, we need to specify a domain of admissible functions.
The \emph{minimal operator} associated with $\FA$ is
\begin{equation*}
   \dop^{\min} f = \FA f,
   \qquad
   \mD(\dop^{\min}) = C_c^\infty(\R).
\end{equation*}
Since $\FA$ is in the limit point case both at $+\infty$ and $-\infty$, the operator
$\dop^{\min}$ is essentially selfadjoint (see, e.g. \cite[7.1.3]{Triebel}).
Its closure is the so-called \emph{maximal operator} associated to $\FA$:
\begin{equation}
   \label{eq:A}
   \dop f = \FA f,
   \qquad
   \mD(\dop) = \{ f:\R\to\C : f, f'\ \text{abs. cont.},\ f,\,\FA f\in L_2(\R)\}.
\end{equation}
Note that
\begin{equation*}
   \dop^{\min} \subset \overline{\dop^{\min}} = \dop = \dop^*.
\end{equation*}
It is well known that $\dop$ has a compact resolvent,
and that its spectrum consists of simple eigenvalues:
\begin{equation}
   \label{eq:Aspectrum}
   \sigma(\dop)=\sigma_p(\dop)=\Big\{ n+\frac{1}{2} : n\in\N_0 \Big\}.
\end{equation}
The corresponding eigenfunctions are
\begin{equation*}
   \psi_n(t) = \e^{-t^2/2} c_n H_n(t)
\end{equation*}
where $H_n$ is the $n$th Hermite polynomial of order $n$,
\begin{equation*}
   H_n(t) = (-1)^n \e^{t^2} \frac{\rd^n}{\rd t^n} \e^{-t^2},
\end{equation*}
and the normalisation factor 
$c_n := (\pi^{\frac{1}{2}} 2^n n!)^{-\frac{1}{2}}$
is chosen such that $\langle\psi_n, \psi_m\rangle = \delta _{nm}$.
\smallskip

\begin{remark}
   A straightforward calculation shows that
   if $u$ is a solution of $(\FA+\lambda)u=0$, then
   $\left( t  - \frac{\rd}{\rd t} \right)^n u$ is a solution of
   $(\FA+\lambda-n)u=0$ and
   $\left( \frac{\rd}{\rd t} + t \right)^n u$ is a solution of
   $(\FA+\lambda+n)u=0$.

   In particular, all eigenfunctions of $\dop$ can be obtained by the recursion
   \begin{equation}
      \label{eq:recursion}
      \psi_0(t) = \pi^{-\frac{1}{2}} \e^{-t^2/2},
      \qquad
      \psi_n(t) = c_n \left( t - \frac{\rd}{\rd t} \right)^{n}\e^{-t^2/2}.
   \end{equation}
   Note that $\left( \frac{\rd}{\rd t} + t \right)^n \psi_0 =0$, in agreement with the fact that $\dop$ has no negative eigenvalues.
\end{remark}

From the recursion formula \eqref{eq:recursion} it is clear that
$\psi_n$ is an even function if $n$ is even, and that it is an odd function, if $n$ is odd.
In particular, we have for $n\in\N_0$
\begin{align*}
   \psi_{2n}(-x) &= \psi_{2n}(x), &\psi_{2n}(0)&\neq 0, &\psi_{2n}'(0)&=0,\\
   \psi_{2n+1}(-x) &= -\psi_{2n+1}(x), &\psi_{2n+1}(0) &= 0, &\psi_{2n+1}'(0) &\neq 0.
\end{align*}

In Section~\ref{sec:halfline} we consider the restriction of the harmonic oscillator to the open half lines $R_\pm$:
\begin{align*}
   \dop_{\pm}^{\min} f(t) &:= \FA f(t),
   & \mD(\dop_{\pm}^{\min}) &:= C_c^\infty(\R_\pm),
   \\[2ex]
   \dop_{\pm}^{\max} f(t) &:= \FA f(t),
   & \mD(\dop_{\pm}^{\max}) &:=
   \left\{ f:\R_\pm\to\C  \ :\
   \begin{aligned}
      &f, f'\ \text{abs. cont.},\\
      &\FA f|_{\R_\pm} \in L_2(\R_\pm)
   \end{aligned}
   \right\}.
\end{align*}
With them we define
$\dop_0 = \dop_{-}^{\min} \oplus \dop_{+}^{\min}$.
Clearly, $\mD(\dop_0^*) = \mD(\dop_{-}^{\max}) \oplus \mD(\dop_{+}^{\max})$.

In sections~\ref{sec:B} and \ref{sec:C} we will study several restrictions of the operator $\dop$ by imposing conditions at $t=0$.
We define the closed symmetric operators
\begin{align*}
   \Bop f &:= \FA f,
   \qquad
   & \mD(\Bop) &:= \{ f \in \mD(\dop) : f(0) = 0\},
   \\
   \cop f &:= \FA f,
   \qquad
   & \mD(\cop) &:= \{ f \in \mD(\dop) : f(0) = f'(0) = 0\}.
\end{align*}
So we have the following chain of operators
\begin{equation*}
   \dop_0 \subset \overline{\dop_0} = \cop \subset \Bop \subset \dop
   = \dop^* \subset \Bop^* \subset \cop^* = \dop_0^*.
\end{equation*}
With exception of the first one, all inclusions are one-dimensional.
We will classify all selfadjoint extensions of $\Bop$ and $\cop$ in terms of conditions on the behaviour at $0$ of the functions in the corresponding domains.
Slightly abusing language, we will call these conditions \emph{boundary conditions at $0$}.
We will not use the von Neumann extension theory for symmetric operators, but will identify selfadjoint extensions with maximal neutral subspaces of $\C^2$, $\C^3$ and $\C^4$, respectively, equipped with an inner product induced by the condition
$\langle \FA f,\ g\rangle = \langle f,\ \FA g\rangle$ for $f,g$ in appropriate spaces.
It turns out that the selfadjoint extensions of $\Bop$ can be parametrised by one real parameter. Every selfadjoint extension is of the form
\begin{equation*}
   \mD(\Bop_{\theta}) =
   \left\{ f\in \mD(\dop_0^*) \ :\
   \begin{aligned}
      &f \text{ cont. in }0,\\
      &\sqrt{2}\cos(\theta)\, f(0) = \sin(\theta) \big[ f'(+0) - f'(-0) \big]
   \end{aligned}
 \right\}
\end{equation*}
for $\theta\in [0,\pi)$.
All functions in these domains are continuous at $0$, but their derivative has a jump proportional to its value in $0$.
There is a one-to-one correspondence between the constant of proportionality and the particular selfadjoint extension.
The operators $\Bop_{heta}$ can also be interpreted as the classical harmonic oscillator with a $\delta$-interaction at $0$ on a bigger Hilbert space, see Section~\ref{sec:delta}:
\begin{align*}
   \Bop_{\theta} f
   = \left( -\frac{1}{2} \frac{\rd^2}{\rd t^2} + \frac{1}{2} t^2 + c \delta \right)f(t),
\end{align*}
with $c=\frac{\cot\theta}{\sqrt{2}}$.
Let $G(\omega)$ be as in \eqref{eq:G}.
If $c>-\frac{1}{G(0)}$, then $\Bop_{\theta}$ has only positive eigenvalues.
If $c=\frac{1}{G(0)}$, then $0$ is an eigenvalue of $\Bop_{\theta}$, and if
$c<-\frac{1}{G(0)}$, then $\Bop_{\theta}$ has exactly one negative eigenvalue.
This eigenvalue decreases monotonically to $-\infty$ as $c$ tends to $-\infty$, or equivalently, $\theta$ tends to $\pi$.
\smallskip

The free Schr\"odinger operator with singular potential at $0$ was investigated by \v{S}eba in~\cite{Seba} and then later by Kurasov in~\cite{kurasov}.
Both use von Neumann's extension theory to obtain selfadjoint extensions of a given differential operator on $\R\setminus\{0\}$ and interpret their results in terms of $\delta$- and $\delta'$-interactions at $0$.
The one-dimensional harmonic oscillator with $\delta$-interaction at the origin was considered for instance by Gadella, Glasser and Nieto in \cite{GGN} and  Viana-Gomes and Peres in \cite{VianaGomes}.
In both works the eigenfuntions are calculated in terms of confluent hypergeometric functions.
Moreover, it is shown that the eigenvalues with odd eigenfunctions are not changed, whereas the eigenvalues with even eigenfunctions increase (for $c>0$) or decrease (for $c<0$) when compared with the eigenvalues of the harmonic oscillator without singular perturbation.
\medskip

The paper is organised as follows:
In Section~\ref{sec:halfline} we consider the harmonic oscillator on the open half lines $\R_+$ and $\R_-$ and we classify all selfadjoint extensions of $\dop_\pm^{\min}$ by parametrisation with one real parameter.
In Section~\ref{sec:B} we consider the operator $\Bop$ and show that all its selfadjoint extensions are parametrised by one real parameter.
Moreover, we show that there appear arbitrarily small negative eigenvalues.
In Section~\ref{sec:C} we investigate the operator $\cop$.
The family of selfadjoint extensions is parametrised by four real parameters (equivalently by the set of all unitary $2\times 2$ matrices).
Finally, in Section~\ref{sec:delta} we give an interpretation of the operators $\Bop_{\theta}$ and $\cop_{K}$ as operators with a $\delta$- and $\delta'$-interaction at $0$ in a Hilbert space $H_- \supset L_2(\R)$.
\medskip

In this paper, we use the following notations.
We set $\R_+ = (0,\infty)$ and $\R_- := (-\infty,0)$ and for functions $f:\R\to\C$ we define their restrictions $f_\pm := f|_{\R_\pm}$.
The standard inner products on $L_2(\R)$ and on $\C^n$ are both denoted by $\langle\cdot, \cdot\rangle$. There will never be danger of confusion.

\noindent
Given a sesquilinear form $[\,\cdot\, , \,\cdot\,]$ on $\C^n$, we call a subspace $L$ \emph{neutral} if $[u,v]=0$ for every $u,v\in L$.
A subspace $L$ is called \emph{maximal neutral} if it is neutral and not properly contained in any other neutral subspace.


\section{The harmonic oscillator on the half line} 
\label{sec:halfline}

First we restrict the harmonic oscillator to the half lines $\R_\pm$.
The corresponding minimal operators are
\begin{align*}
   \dop_{\pm}^{\min} f(t) := \FA f(t)\quad (t\in\R_\pm),
   \qquad
   \mD(\dop_{\pm}^{\min}) &:= C_c^\infty(\R_\pm).
\end{align*}

These operators are in the limit point case at $\pm\infty$ and in the limit circle case at $0$, hence they are not essentially selfadjoint.
Their adjoint operators are
\begin{align*}
   \dop_{\pm}^{\max} f(t) &:= \FA f(t)
   \qquad (t\in\R_\pm),
   \\
   \mD(\dop_{\pm}^{\max}) &:=
   \{ f:\R_\pm\to\C  \ :\ f, f'\ \text{abs. cont.},\ \FA f|_{\R_\pm} \in L_2(\R_\pm) \}.
\end{align*}
Note that for $f\in\mD(\dop_+^{\max})$ the one-sided limits
$f(+0) := \lim\limits_{t\to 0^+} f(t)$ and
$f'(+0) := \lim\limits_{t\to 0^+} f'(t)$ exist.
Similarly, for $f\in\mD(\dop_-^{\max})$ the one-sided limits
$f(-0) := \lim\limits_{t\to 0^-} f(t)$ and
$f'(-0) := \lim\limits_{t\to 0^-} f'(t)$ exist.%
\smallskip

All selfadjoint extensions of $\dop_\pm^{\min}$ are given as restrictions of $\dop_\pm^{\max}$ by appropriate boundary conditions at $0$.
In Lemma~\ref{lemma:halfline} we will show that exactly one boundary condition is needed.

Recall that the \emph{defect index} of an operator $T$ with respect to $z\in\rho(T)$ is given by
\begin{equation*}
   n(T,z) := \dim(\ker (T^*-z)).
\end{equation*}
It is well known that the defect indices are constant in the complement of the \emph{numerical range}
\begin{equation*}
   W(T) := \{ \langle Tx, x \rangle : x\in\mD(T), \|x\|=1 \},
\end{equation*}
see, e.g., \cite[Ch. V, Theorem 3.2]{Kato}.
It is easy to see that $W(T)\subset\R$ for a symmetric operator $T$.
Hence its defect indices are constant in the upper and lower complex plane.
We will denote them by
\begin{equation*}
   n_+(T) = \dim(\ker (T^*-z_+)),
   \qquad
   n_-(T) = \dim(\ker (T^*-z_-)),
\end{equation*}
for any $z_\pm\in\C$ with $\im(z_\pm) \in\R_\pm$.
By the von Neumann theory, a symmetric operator has selfadjoint extensions if and only if its defect indices are equal (see for instance \cite[Ch. 8.2]{WeidmannHS}).

\begin{lemma}
   \label{lem:W(A)}
   The defect indices of $\dop_\pm^{\min}$ are
   $n_+(\dop_\pm^{\min}) = n_-(\dop_\pm^{\min}) =1$ and
   \begin{equation}
      \label{eq:W(A)}
      W( \dop_\pm^{\min} ) \subseteq \R_+.
   \end{equation}
\end{lemma}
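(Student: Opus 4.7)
My plan is to prove the two assertions separately, with the numerical range bound first (since it implies symmetry and constrains where the defect indices can be computed).

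For \eqref{eq:W(A)}, I would do a direct integration by parts on test functions. Given $f\in C_c^\infty(\R_\pm)$ with $\|f\|=1$, the compact support inside $\R_\pm$ ensures $f$ vanishes in a neighborhood of both $0$ and $\pm\infty$, so all boundary contributions disappear and one obtains
\begin{equation*}
   \langle \dop_\pm^{\min} f, f\rangle
   = \tfrac{1}{2}\int_{\R_\pm} |f'(t)|^2\,\rd t + \tfrac{1}{2}\int_{\R_\pm} t^2 |f(t)|^2\,\rd t.
\end{equation*}
The crucial point is that $t^2>0$ pointwise on $\R_\pm$ (since $0\notin\R_\pm$), so the potential integral is strictly positive whenever $f\not\equiv 0$. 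This yields $\langle \dop_\pm^{\min} f, f\rangle>0$ and hence $W(\dop_\pm^{\min})\subseteq \R_+$.

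For the defect indices I would invoke the Weyl endpoint classification for Sturm-Liouville expressions. The asymptotic formula \eqref{eq:assymptotic} stated in the introduction already shows $\FA$ is in the limit point case at $\pm\infty$, since of the two linearly independent solutions of $\FA y=\lambda y$, the one with the $+$ sign in the exponent grows like $\e^{t^2/2}$ and fails to be square-integrable at infinity, so only a one-dimensional space of solutions lies in $L_2$ near $\pm\infty$. At the other endpoint $t=0$, the coefficients of $\FA$ are smooth and bounded, which is the classical regular case, so both solutions belong to $L_2$ near $0$ (limit circle at $0$). A half-line with one limit-point and one limit-circle endpoint gives exactly one missing boundary condition, so $n_+(\dop_\pm^{\min})=n_-(\dop_\pm^{\min})=1$ by standard theory (e.g.\ Weidmann).

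As a sanity check one can also argue directly: once $W(\dop_\pm^{\min})\subseteq\R_+$ is known, the defect indices are constant on the connected component $\C\setminus\R_+$ of $\C\setminus\overline{W}$, so they can be computed at any convenient $z$ (for instance a negative real $z$), and the $L_2$-count above gives dimension $1$. I do not foresee any real obstacle in either part. The only detail to be slightly careful about is insisting on strict positivity rather than mere nonnegativity in the numerical range inclusion, which is why it matters that $t$ does not vanish on the open half-line; everything else reduces either to a one-line integration by parts or to invoking the Liouville-Green asymptotics already recorded in the introduction.
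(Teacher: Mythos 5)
Your proof is correct, and the first half (the numerical range bound via integration by parts on $C_c^\infty(\R_\pm)$, with strict positivity coming from $\int_{\R_\pm} t^2|f|^2\,\rd t>0$) is essentially the paper's argument. For the defect indices, however, you take a genuinely different route: you invoke the Weyl endpoint classification (limit point at $\pm\infty$ via the Liouville--Green asymptotics \eqref{eq:assymptotic}, regular hence limit circle at $0$) and cite the standard Sturm--Liouville result that one limit-point plus one regular endpoint yields deficiency indices $(1,1)$. The paper instead uses the numerical range bound to place the point $-\tfrac{1}{2}$ in the region $\C\setminus\R_+$ where the defect index is constant, and then computes $\dim\ker(\dop_\pm^{\max}+\tfrac{1}{2})$ directly by exhibiting the explicit solutions $\phi_1(t)=\e^{t^2/2}$ and $\phi_\pm(t)=\e^{t^2/2}\int_t^{\pm\infty}\e^{-s^2}\,\rd s$ of $(\FA+\tfrac{1}{2})f=0$ and checking which restriction is square integrable; this matches your ``sanity check'' remark, except that you leave the one-dimensionality of the $L_2$-solution space at a negative real $z$ to the asymptotics rather than to explicit functions. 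Your argument is shorter and more conceptual, but the paper's explicit construction is not idle: the functions $\phi_\pm$ are reused later (in Proposition~\ref{prop:B0} to identify $\ker(\Bop^*+\tfrac{1}{2})$, and in the lemma on the eigenfunctions $u_{2n}$ built from $\phi_{\pm,n}$), so the concrete computation buys material that the abstract endpoint classification does not provide.
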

\begin{proof}
   We show the lemma only for $\dop_+^{\min}$.
   For all $f\in\mD(\dop_+^{\min})$, integration by parts yields
   \begin{align*}
      \langle \dop_+^{\min} f, f\rangle
      &= -f'(x) \overline f(x)\Big|_0^\infty
      + \int_0^\infty |f'|^2 + x^2 |f|^2\,\rd x
      \\
      &= \int_0^\infty |f'|^2 + x^2 |f|^2\,\rd x
      > 0
   \end{align*}
   which shows \eqref{eq:W(A)}.
   Hence the defect index of $\dop_+^{\min}$ is constant in $\C\setminus\R_+$.

   It can be easily verified that two pairs of independent solutions of $(\FA + \frac{1}{2})f = 0$ are
   \begin{align}
      \label{eq:phi2}
      \phi_1(t) = \e^{\frac{1}{2} t^2},
      \qquad
      \phi_+(t) &= \e^{\frac{1}{2} t^2}\,\int_t^\infty \e^{-s^2}\,\rd s,
      \\
      \label{eq:phi3}
      \phi_1(t) = \e^{\frac{1}{2} t^2},
      \qquad
      \phi_-(t) &= \e^{\frac{1}{2} t^2}\,\int_{-\infty}^t \e^{-s^2}\,\rd s
      = \phi_+(-t).
   \end{align}
   Observe that $\phi_+ + \phi_- = \sqrt{\pi}\, \phi_1$.
   Clearly $\phi_1|_{\R_+} \notin L_2(\R_+)$,
   but $\phi_+|_{\R_+}\in L_2(\R_+)$.
   Therefore $\ker( \dop_+^{\max} + \frac{1}{2} ) = \linspan\{ \phi_+|_{\R_+} \}$
   and
   $n_+(\dop_+^{\min}) = n_-(\dop_+^{\min}) =1$.

   Analogous calculations show that
   $\langle \dop_-^{\min} f, f\rangle \ge 0$ for all $f\in\mD(\dop_-^{\min})$,
   $\phi_1|_{\R_-} \notin L_2(\R_-)$,
   $\phi_-|_{\R_-}\in L_2(\R_-)$,
   and therefore $\ker( \dop_-^{\max} + \frac{1}{2} ) = \linspan\{ \phi_-|_{\R_-} \}$
   and
   $n_+(\dop_-^{\min}) = n_-(\dop_-^{\min}) =1$.
\end{proof}

The following result on selfadjoint extensions of $\dop_\pm^{\min}$ follows easily from the general theory of Sturm-Liouville operators.
For the convenience of the reader, we present it here with a proof in order to illustrate the method of indefinite inner product spaces for the description of selfadjoint extensions.

\begin{lemma}\label{lemma:halfline}
   All selfadjoint extensions of $\dop_\pm^{\min}$ are one-dimensional;
   they are restrictions of $\dop_\pm^{\max}$ of the form
   \begin{equation}
      \mD(\dop_{\pm,\theta}) =
      \big\{ f\in\mD(\dop_\pm^{\max}) : \cos(\theta) f_\pm(+0) = \sin(\theta)f'_\pm(+0) \big\}
      \label{domain_ext}
   \end{equation}
   with $\theta\in [0, \pi)$.
\end{lemma}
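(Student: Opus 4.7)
The plan is to exploit the indefinite-inner-product viewpoint advertised in the introduction rather than invoke von Neumann theory directly. First I would record the Lagrange/Green identity: for $f, g \in \mD(\dop_+^{\max})$, integration by parts on $\R_+$ (with no contribution at $+\infty$ thanks to the limit point case) yields
\begin{equation*}
   \langle \dop_+^{\max} f, g\rangle - \langle f, \dop_+^{\max} g\rangle
   = \tfrac{1}{2}\bigl(f'(+0)\overline{g(+0)} - f(+0)\overline{g'(+0)}\bigr).
\end{equation*}
This transports the analysis to the boundary map $\pi(f) := (f(+0), f'(+0))$ from $\mD(\dop_+^{\max})$ to $\C^2$, where the right-hand side is a non-degenerate, skew-Hermitian form $[(a_1,a_2), (b_1,b_2)] := \tfrac12(a_2\overline{b_1} - a_1\overline{b_2})$.

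Next I would verify that $\pi$ is surjective (the cut-off $f(t)=(a_1 + a_2 t)\chi(t)$ realises any prescribed boundary values) and that $\ker \pi = \mD(\overline{\dop_+^{\min}})$; the latter is standard for the regular endpoint $0$ and is consistent with $n_\pm(\dop_+^{\min}) = 1$ from Lemma~\ref{lem:W(A)}. Every closed restriction $T$ of $\dop_+^{\max}$ containing $\overline{\dop_+^{\min}}$ then has the form $\mD(T) = \pi^{-1}(L)$ for a unique $L \subseteq \C^2$. By the Lagrange identity, $T$ is symmetric iff $L$ is neutral, and $T = T^*$ iff $L$ equals its $[\cdot,\cdot]$-orthogonal, i.e., $L$ is maximal neutral.

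Finally I would classify maximal neutral subspaces of $(\C^2, [\cdot,\cdot])$. Non-degeneracy forbids two-dimensional neutral subspaces, so maximality forces one-dimensional. A line $\linspan\{v\}$ is neutral iff $v_1\overline{v_2} \in \R$, which forces $v$ to be a complex multiple of some real vector $(\sin\theta, \cos\theta)$; since the line is invariant under $\theta \mapsto \theta + \pi$, the range $\theta \in [0,\pi)$ gives a bijection, and unravelling $L_\theta = \{(a_1, a_2) : \cos(\theta) a_1 = \sin(\theta) a_2\}$ produces exactly the boundary condition in \eqref{domain_ext}. The one delicate point I expect is making the ``maximal neutral $\Leftrightarrow$ selfadjoint'' equivalence rigorous without quoting von Neumann; this requires a short codimension count using that $[\cdot,\cdot]$ is non-degenerate on the two-dimensional quotient $\mD(\dop_+^{\max})/\mD(\overline{\dop_+^{\min}}) \cong \C^2$. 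Everything else reduces to an integration by parts and elementary linear algebra on $\C^2$.
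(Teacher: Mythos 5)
Your proposal is correct and follows essentially the same route as the paper: integration by parts (Lagrange identity) transfers the selfadjointness condition to a nondegenerate indefinite form on $\C^2$, selfadjoint extensions are identified with maximal neutral subspaces, and these are parametrised by $\theta\in[0,\pi)$, yielding exactly \eqref{domain_ext}. The only differences are cosmetic: the paper gets one-dimensionality of the extensions from the defect-index computation in Lemma~\ref{lem:W(A)} and classifies the neutral lines via the positive/negative vectors $(1,\I)^t$, $(-1,\I)^t$ and an isometry parametrised by $\e^{-2\I\theta}$, whereas you characterise neutral lines directly as complex multiples of real vectors $(\sin\theta,\cos\theta)^t$ and make the maximal-neutral$\,\Leftrightarrow\,$selfadjoint equivalence explicit through the boundary map and the quotient $\mD(\dop_+^{\max})/\mD(\overline{\dop_+^{\min}})\cong\C^2$, a step the paper asserts without detailed proof.
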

\begin{proof}
   We show the claim only for $\dop_+^{\min}$ since the corresponding assertions for $\dop_-^{\min}$ follow analogously.

   \noindent
   By Lemma~\ref{lem:W(A)}, the defect index of $\dop_\pm^{\min}$ is equal to 1 on $\C\setminus\R_+$.
   Two functions $f,g\in\mD(\dop_+^{\max})$ belong to a particular selfadjoint extension of $\dop_+^{\min}$ if and only if 
   $\langle \dop_+^{\max} f, g \rangle - \langle \dop_+^{\max} f, g \rangle =0$.
   Integration by parts leads to the condition
   \begin{equation}
      \label{eq:maxsymm}
      0 = \langle \dop_+^{\max} f, g \rangle
      - \langle \dop_+^{\max} f, g \rangle
      = f(+0)\overline g'(+0) - f'(+0)\overline g(+0).
   \end{equation}
   On $\C^2$ let us define the Hermitian inner product
   \begin{equation*}
      \left[
      \begin{pmatrix} x_1\\ x_2
      \end{pmatrix},\
      \begin{pmatrix} y_1\\ y_2
      \end{pmatrix}
      \right]
      = \left\langle
      \begin{pmatrix}
         0 & -\I \\ \I & 0
      \end{pmatrix}
      \begin{pmatrix} x_1\\ x_2
      \end{pmatrix},\
      \begin{pmatrix} y_1 \\ y_2
      \end{pmatrix}
      \right\rangle
      = \I ( x_2 \overline y_1 - x_1 \overline y_2).
   \end{equation*}
   Then $f,g$ belong to a particular selfadjoint extension of $\dop_+^{\min}$ if and only if $(f(+0), f'(+0))^t$ and $(g(+0), g'(+0))^t$ belong to
   a maximal neutral subspace of $(\C^2, [\cdot,\ \cdot])$.
   Clearly $e_+=(1,\, \I)^t$ is a positive and $e_-=(-1,\, \I)^t$ is a negative vector
   and $\|e_+\|=\|e_-\|$.
   Hence all maximal neutral subspaces are given by
   \begin{align*}
      L_{\theta}
      &= \linspan\left\{
      \begin{pmatrix}
	 1-\e^{-2\I\theta}\\ \I(1+\e^{-2\I\theta})
      \end{pmatrix}
      \right\}
      = \left(
      \linspan\left\{
      \begin{pmatrix}
	 1+\e^{-2\I\theta}\\ \I(1-\e^{-2\I\theta})
      \end{pmatrix}
      \right\}
      \right)^\perp\!\!\!,
      \quad \theta\in [0,\pi).
   \end{align*}
   Therefore all selfadjoint extensions of $\dop_+^{\min}$ are given by
   \begin{align*}
      \mD(\dop_{+,\theta})
     & =
      \left\{ f\in\mD(\dop_+^{\max}) :
      \begin{pmatrix}
	 f(+0)\\ f'(+0)
      \end{pmatrix}
      \in L_{\theta}
      \right\}
      \\[1ex]
      &=
      \big\{ f\in\mD(\dop_+^{\max}) \ :\ f(+0)(1+\e^{-2\I\theta})= -\I f'(+0)(1-\e^{-2\I\theta})\big\}
   \end{align*}
   with $\theta\in [0,\pi)$.
   The last description yields (\ref{domain_ext}).
      \end{proof}

\begin{remark}
   \label{rem:symmetry}
   Let $f\in L_2(\R_+)$ and $g\in L_2(\R_-)$ such that $g(x) = f(-x)$ for $x\in\R_-$.
   From the formula \eqref{domain_ext} it is clear that
   $f\in\mD(\dop_{+,\theta})$ for some $\theta\in(0,\pi)$ if and only if
   $g\in\mD(\dop_{-,\pi-\theta})$
   and
   $f\in\mD(\dop_{+,0})$ if and only if $g\in\mD(\dop_{-,0})$.
\end{remark}
\smallskip

Recall that $\{\psi_n : n\in\N\}$, the set of eigenfunctions of the harmonic oscillator on $\R$ (see \eqref{eq:recursion}), is an orthonormal basis of $L_2(\R)$.
Let us denote
\begin{align}
   \label{eq:psipm}
   \psi_{n,+} = \psi_n|_{\R_+},
   \qquad
   \psi_{n,-} = \psi_n|_{\R_-},
   \qquad\qquad
   n\in\N_0.
\end{align}
Clearly both
$\{\psi_{2n,\pm} : n\in\N_0\}$ and $\{\psi_{2n+1,\pm} : n\in\N_0\}$
form a complete orthogonal systems on $\R_\pm$.
With these observations we can calculate the spectrum of the operators $\dop_{\pm,\theta}$ for $\theta=0$ and $\theta= \frac{\pi}{2}$.

\begin{corollary}
   Let $\dop_{\pm,\theta}$ as in \eqref{domain_ext}.
   \begin{enumerate}
      \item\label{item:i}
      $\sigma(\dop_{\pm,0}) = \sigma_p(\dop_{\pm,0})
      = \{ 2n+\frac{3}{2}  : n\in\N_0\}$
      and the corresponding eigenfunctions are $\psi_{2n+1,\pm}$, $n\in \N_0$.

      \item
      $\sigma(\dop_{\pm,\frac{\pi}{2}}) = \sigma_p(\dop_{\pm,\frac{\pi}{2}})
      = \{ 2n + \frac{1}{2} : n\in\N_0\}$
      and the corresponding eigenfunctions are $\psi_{2n,\pm}$, $n\in \N_0$.
   \end{enumerate}
\end{corollary}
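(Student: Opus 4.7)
The plan is to identify the two distinguished angles with the Dirichlet and Neumann conditions at the origin, exhibit the Hermite eigenfunctions of appropriate parity as eigenfunctions of the corresponding extensions, and conclude via the completeness remark stated just before the corollary.

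First I would unpack condition \eqref{domain_ext} at the two angles. For $\theta = 0$ it reduces to $f_\pm(+0) = 0$ (Dirichlet), and for $\theta = \pi/2$ to $f'_\pm(+0) = 0$ (Neumann). Next, since each Hermite function $\psi_n$ is a Schwartz function satisfying $\FA \psi_n = (n + \tfrac12)\psi_n$ pointwise on $\R$, the restriction $\psi_{n,\pm}$ automatically lies in $\mD(\dop_\pm^{\max})$ and acts there as an eigenfunction with eigenvalue $n + \tfrac12$. The parity identities recorded just after \eqref{eq:recursion} give $\psi_{2n+1}(0) = 0$ and $\psi_{2n}'(0) = 0$, so $\psi_{2n+1,\pm}$ belongs to $\mD(\dop_{\pm,0})$ and $\psi_{2n,\pm}$ belongs to $\mD(\dop_{\pm,\pi/2})$, producing the eigenvalues $2n + \tfrac32$ and $2n + \tfrac12$ respectively.

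To rule out any additional spectrum I would invoke the completeness statement recalled just before the corollary: both $\{\psi_{2n+1,\pm} : n \in \N_0\}$ and $\{\psi_{2n,\pm} : n \in \N_0\}$ are complete orthogonal systems in $L_2(\R_\pm)$. Since $\dop_{\pm,0}$ and $\dop_{\pm,\pi/2}$ are selfadjoint by Lemma~\ref{lemma:halfline} and admit an orthonormal basis of eigenvectors whose eigenvalues diverge to $+\infty$, the spectral theorem forces the spectrum to be purely discrete and equal to the displayed set.

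I do not anticipate a genuine obstacle: the argument is a direct assembly of the explicit Dirichlet/Neumann form of \eqref{domain_ext} at the two angles, the parity of the Hermite functions at $0$, and the completeness of the even or odd Hermite subsystems on the half line. The only step that requires some care is the last one, where one must appeal to the spectral theorem to conclude that no further spectrum can appear beyond an orthonormal basis of eigenvectors with divergent eigenvalues.
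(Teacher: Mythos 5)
Your argument is correct and follows essentially the same route as the paper: show via the parity relations that $\psi_{2n+1,\pm}$ (resp.\ $\psi_{2n,\pm}$) satisfy the $\theta=0$ (resp.\ $\theta=\pi/2$) boundary condition and are eigenfunctions, then use completeness of the odd (resp.\ even) Hermite system on the half line to exclude further spectrum. Your explicit identification of the two angles with Dirichlet/Neumann conditions and the appeal to the spectral theorem merely make precise the paper's closing sentence "the claim follows from completeness."
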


\begin{proof}
   We will prove the claim only for $\dop_{+,0}$.
   All other statements are proved analogously.
   Since all $\psi_{2n+1}$ are odd functions, it follows that $\psi_{2n+1}(0)=0$ and therefore their restrictions $\psi_{2n+1,+}$ belong to $\mD(\dop_{+,0})$.
   Moreover, $\dop_{+,0}\psi_{2n+1,+} = (2n+\frac{3}{2}) \psi_{2n+1,+}$, hence
   $\{ 2n+ \frac{3}{2} : n\in\N_0\} \subset \sigma_p(\dop_{\pm,0})$.
   Now the claim follows from the completeness of the system
   $\{\psi_{2n+1,+}\}$ in $L_2(\R_+)$.
\end{proof}

From the asymptotic expansion \eqref{eq:assymptotic} of solutions of the equation
$\FA y = \lambda y$ it is clear that for every $\lambda\in\R$ there is exactly one solution $y_\lambda$ which is square integrable on $\R_+$.
It belongs to the domain of exactly one selfadjoint extension $\dop_{+,\theta}$,
namely the one with $\theta\in [0,\pi)$ such that
$\cos\theta\, y_\lambda(0) = \sin\theta\, y_\lambda'(0)$.
The expansion \eqref{eq:assymptotic} also shows that all eigenvalues are simple.
\smallskip

Since $W(\dop_\pm^{\min})\subseteq (0, \infty)$,
the non-positive spectrum
$\sigma(\dop_{\theta}) \cap (-\infty, 0]$
of any selfadjoint extension $\dop_{\theta}$
consists of at most one eigenvalue of multiplicity at most $1$, see \cite[Ch. 8.4, Cor. 2]{WeidmannHS}.
The next lemma deals with these eigenvalues.

\begin{lemma}\label{lemma:negEVA}
   Let $\dop_{\pm,\theta}$ as in \eqref{domain_ext} and let $\alpha_A$ as in \eqref{eq:delta} below.
   \begin{enumerate}

      \item
      \label{enumi:i}
      $\dop_{+,\theta}$ has the eigenvalue $0$ if and only if $\theta= \pi-\alpha_A$.
      It has a negative eigenvalue if and only if $\theta\in (\pi-\alpha_A,\,\pi)$.
      Moreover, for $j=1,2$, let $\lambda_j$ be eigenvalues of $\dop_{+,\theta_j}$.
      If $\lambda_2<\lambda_1<0$, then $\pi-\alpha_A < \theta_1 < \theta_2<\pi$.

      \item
      \label{enumi:ii}
      $\dop_{-,\theta}$ has the eigenvalue $0$ if and only if $\theta= \alpha_A$.
      It has a negative eigenvalue if and only if $\theta\in (0, \alpha_A)$.
      Moreover, for $j=1,2$, let $\lambda_j$ be eigenvalues of $\dop_{-,\theta_j}$.
      If $\lambda_2<\lambda_1<0$, then $\alpha_A > \theta_1 > \theta_2>0$.

   \end{enumerate}
\end{lemma}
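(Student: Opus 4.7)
The plan is to prove (i) and deduce (ii) via the mirror symmetry $t\mapsto -t$ recorded in Remark~\ref{rem:symmetry}. The central object will be, for each $\lambda\le 0$, a (unique up to scalar) solution $y_\lambda$ of $\FA y=\lambda y$ belonging to $L_2(\R_+)$; its existence and one-dimensionality are guaranteed by the limit point case of $\FA$ at $+\infty$. Since $\tfrac{1}{2}t^2-\lambda>0$ on $(0,\infty)$, a Sturm-type convexity argument applied to the decaying $L_2$ solution will exclude zeros, so that $y_\lambda$ may be chosen with $y_\lambda>0$ on $[0,\infty)$. In particular $y_\lambda(0)>0$, and $\theta(\lambda)\in(0,\pi)$ can then be defined uniquely by $\cot\theta(\lambda)=y_\lambda'(0)/y_\lambda(0)$. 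Thus $\lambda$ is an eigenvalue of $\dop_{+,\theta}$ iff $\theta=\theta(\lambda)$, and by the definition of $\alpha_A$ in \eqref{eq:delta} one has $\theta(0)=\pi-\alpha_A$.

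The key step will be the strict monotonicity $\lambda_2<\lambda_1\le 0\,\Rightarrow\,\theta(\lambda_1)<\theta(\lambda_2)$, which I would establish through the standard Wronskian identity. From $y_{\lambda_i}''=(t^2-2\lambda_i)y_{\lambda_i}$ one obtains $(y_{\lambda_1}y_{\lambda_2}'-y_{\lambda_1}'y_{\lambda_2})'=2(\lambda_1-\lambda_2)y_{\lambda_1}y_{\lambda_2}$. Integrating from $0$ to $\infty$, and using that in the limit point case at $+\infty$ the boundary form vanishes for any pair in $\mD(\dop_+^{\max})$, will yield
\[
   y_{\lambda_1}(0)y_{\lambda_2}'(0)-y_{\lambda_1}'(0)y_{\lambda_2}(0) = -2(\lambda_1-\lambda_2)\int_0^\infty y_{\lambda_1}(t)\,y_{\lambda_2}(t)\,\rd t.
\]
Writing $(y_{\lambda_i}(0),y'_{\lambda_i}(0))=R_i(\sin\theta(\lambda_i),\cos\theta(\lambda_i))$ with $R_i>0$, the left-hand side becomes $R_1R_2\sin(\theta(\lambda_1)-\theta(\lambda_2))$, while the right-hand side is strictly negative; since both angles lie in $(0,\pi)$, this forces $\theta(\lambda_1)<\theta(\lambda_2)$.

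To determine the range of $\theta(\lambda)$ as $\lambda$ varies over $(-\infty,0]$, I would use the quadratic form
\[
   \langle\dop_{+,\theta}f,f\rangle=\tfrac{1}{2}\cot(\theta)\,|f(0)|^2 + \tfrac{1}{2}\int_0^\infty\big(|f'(t)|^2+t^2|f(t)|^2\big)\,\rd t,
\]
produced by integration by parts. Testing with any fixed admissible $f$ for which $f(0)\ne 0$, the Rayleigh quotient tends to $-\infty$ as $\theta\to\pi^-$, so $\inf\sigma(\dop_{+,\theta})\to-\infty$ in that limit. Combined with continuity of $\lambda\mapsto\theta(\lambda)$ and the monotonicity from the Wronskian step, this will produce a continuous strictly decreasing bijection $\lambda\mapsto\theta(\lambda)$ from $(-\infty,0]$ onto $[\pi-\alpha_A,\pi)$, and the three assertions of (i) drop out at once. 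The main obstacle I anticipate is justifying the vanishing of the Wronskian at $+\infty$: it is classical in the limit point case, but it must be applied to the solutions $y_{\lambda_i}\in\mD(\dop_+^{\max})$ with some care.
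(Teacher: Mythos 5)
Your overall strategy is viable and genuinely different from the paper's. You work abstractly with the decaying solution $y_\lambda$, define a Pr\"ufer-type angle $\theta(\lambda)$ through its Cauchy data at $0$, get strict monotonicity from the Lagrange/Wronskian identity, and obtain the range by a variational argument; part (ii) is then deduced from Remark~\ref{rem:symmetry} exactly as in the paper. The paper instead computes everything explicitly: it solves the Cauchy problem \eqref{negativeEq}--\eqref{negativeBC} by a power series, produces the $L_2(\R_+)$ solution by reduction of order \eqref{def_v}, reads off the boundary condition $\tan\theta=-G(\omega)$ with $G$ as in \eqref{eq:G}, and gets monotonicity and the limit $\theta\to\pi$ from the explicit monotonicity of $G(\omega)$ and $G(\omega)\to 0$. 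Your Wronskian step is correct (the boundary form at $+\infty$ vanishes in the limit point case, or more simply by the Gaussian decay coming from \eqref{eq:assymptotic}), and it is an attractive replacement for the paper's explicit monotonicity argument. However, there are genuine gaps in the remaining steps.

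First, the equality $\theta(0)=\pi-\alpha_A$ does \emph{not} hold ``by the definition of $\alpha_A$'': $\alpha_A=\arctan(G(0))$ with $G(0)=\int_0^\infty u(s,0)^{-2}\,\rd s$, where $u(\cdot,0)$ is the solution with Neumann data $u(0)=1$, $u'(0)=0$. To connect this with your $y_0$ you must show that the $L_2(\R_+)$ solution at $\lambda=0$ is proportional to $v(\cdot,0)=u(\cdot,0)\int_t^\infty u(s,0)^{-2}\,\rd s$, i.e.\ that this integral converges, that $v(\cdot,0)\in L_2(\R_+)$ while $u(\cdot,0)\notin L_2(\R_+)$, and that $v_t'(0,0)=-1$; this is precisely the computation (with the accompanying growth and $\cosh$-type bounds) that the paper carries out, and without it the threshold value in (i) is unproven. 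Second, ``testing with any fixed admissible $f$'' is not literally possible: the operator domain \eqref{domain_ext} depends on $\theta$ through $f'(+0)=\cot\theta\, f(+0)$, so no single $f$ lies in $\mD(\dop_{+,\theta})$ for two different $\theta$. The argument must be run on the quadratic form $\tfrac12\cot\theta\,|f(0)|^2+\tfrac12\int_0^\infty(|f'|^2+t^2|f|^2)\,\rd t$, whose form domain is $\theta$-independent, together with a verification that $\dop_{+,\theta}$ is the operator associated with it; note that a naive $\theta$-dependent trial function (e.g.\ a cut-off of $1+\cot\theta\,t$) does not drive the Rayleigh quotient to $-\infty$, since its norm grows like $\cot^2\theta$. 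Third, the continuity of $\lambda\mapsto\theta(\lambda)$, which you need for surjectivity onto $[\pi-\alpha_A,\pi)$, is asserted but not proved; in your abstract setting it requires, for instance, analyticity of the Weyl--Titchmarsh $m$-function on $(-\infty,0]\subset\rho(\dop_{+,0})$ or a continuous-dependence argument for the decaying solution, whereas in the paper it follows from the explicit formula for $G$ by dominated convergence. Finally, a minor point: the limit point case at $+\infty$ gives only uniqueness of the $L_2$ solution for real $\lambda$; its existence comes from the asymptotics \eqref{eq:assymptotic}.
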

\begin{proof}
   We proof only \enumiref{enumi:i}.
   The claims in \enumiref{enumi:ii} follow from \enumiref{enumi:i} and Remark~\ref{rem:symmetry}.
   Let $\omega\ge 0$.
   On $[0,\infty)$ we consider the Cauchy problem
   \begin{align}
      \label{negativeEq}
      \left( -\frac{1}{2} \frac{\rd^2}{\rd t^2} + \frac{1}{2} t^2 \right)u(t,\omega)=-\omega^2 u(t,\omega),\\
      \label{negativeBC}
      u(0,\omega)=1,\quad
      u^{\prime}_t(0,\omega)=0.
   \end{align}
   Let us write $u(t,\omega)$ as power series
   \begin{equation}
      \label{serie}
      u(t,\omega) = \sum_{n=0}^{\infty}a_n(\omega)t^n .
   \end{equation}
   Replacing \eqref{serie} in \eqref{negativeEq} we obtain $a_{2n+1}(\omega) = 0$ and
   \begin{align}
      \begin{aligned}
	 \label{a-omega}
	 a_0(\omega) &=1,\quad
	 a_2(\omega) = \omega^2
	 \\
	 a_{2n+2}(\omega) &=\frac{1}{(2n+2)(2n+1)}(2\omega^2 a_{2n}(\omega)+a_{2n-2}(\omega)),\quad n \ge 1.
      \end{aligned}
   \end{align}
   Let us show that the series converges for all $t>0$.
   To this end we will show that for every  $\omega \ge 0$ there is a constant $q(\omega)$ such that
   \begin{equation}\label{a2n}
      a_{2n}(\omega)\leq \frac{q(\omega)}{n!},
      \qquad n\in\N.
   \end{equation}
   Let us assume that this inequality is true for  numbers $n-1$ and $n$.
   Then, due to \eqref{a-omega}, we have
   \begin{align*}
      a_{2n+2}
      & \leq \frac{1}{(2n+2)(2n+1)}\left(\frac{2\omega^2 q(\omega)}{n!}+\frac{q(\omega)}{(n-1)!}\right)
      \\
      &= \frac{q(\omega)}{(n+1)!}\left( \frac{\omega^2}{2n+1}+\frac{n}{2(2n+1)}\right).
   \end{align*}
   Since $\frac{\omega^2}{2n+1}+\frac{n}{2(2n+1)}\rightarrow \frac{1}{4}$ for $n\rightarrow \infty$, there is a natural number $n_0(\omega)$ such that
   $\frac{\omega^2}{2n+1}+\frac{n}{2(2n+1)}<1$ for every $n\geq n_0(\omega)$.
   Thus, \eqref{a2n} holds if we take
   \begin{equation*}
      q(\omega)=\max \{ 1,\ 2!\cdot a_2(\omega),\ldots ,\  n_0!\cdot a_{2n_0}(\omega)\}.
   \end{equation*}
   Due to (\ref{a2n}) the series  (\ref{serie}) converges and $u(t,\omega)\le q(\omega)\,\e^{t^2}$ for $\omega \ge 0$, $t>0$.

   Note that all $a_{2n}(\omega)$ are positive increasing functions of $\omega$, so for every $t> 0$, $u(t,\omega)$ is an increasing function with respect to $\omega$ and for every $\omega\geq 0$, $u(t,\omega)$ is a positive increasing function with respect to $t$, so $u(t,\omega)\notin  L_2(\R_+)$.
   For the special case $\omega=0$ we obtain
   \begin{equation}\label{u0} u(t,0)=1+\sum_{n=1}^{\infty}\frac{t^{4n}}{\prod_{k=1}^{n}4k(4k-1)}.
   \end{equation}
   Due to the inequalities
   \begin{equation*}
      \frac{1}{3^22n(2n-1)}<\frac{1}{4n(4n-1)}<\frac{1}{2^2\, 2n(2n-1)},
      \quad n\in\N,
   \end{equation*}
   we have
   \begin{equation}\label{v0}\cosh\Big(\frac{x^2}{3}\Big)<u(t,0)<\cosh\Big(\frac{x^2}{2}\Big),\quad t>0.
   \end{equation}
   Now let us define (compare with (\ref{eq:phi2}))
   \begin{equation}\label{def_v}
      v(t,\omega) := u(t,\omega) \,\int_t^\infty \big(u(s,\omega)\big)^{-2}\,\rd s
   \end{equation}
   and
   \begin{equation}\label{eq:G}
      G(\omega) := v(0,\omega) = \int_0^\infty \big(u(s,\omega)\big)^{-2}\,\rd s .
   \end{equation}
   Since $u(t,\omega)$ is positive and increasing both in $t$ and $\omega$, it follows from \eqref{v0} that
   \begin{align*}
      v(t,\omega)
      &<u(t,\omega) \,\int_t^\infty \big(u(t,\omega)\big)^{-1} \big(u(s,\omega)\big)^{-1}\,\rd s
      =\int_t^\infty  \big(u(s,\omega)\big)^{-1}\,\rd s
      \\
      & \le \int_t^\infty \big(u(s,0)\big)^{-1}\,\rd s
      <\int_t^\infty \big(\cosh(s^2/3)\big)^{-1}\,\rd s\ \, \in  L_2(\R_+).
   \end{align*}
   It is easy to check that $v(\cdot, \omega)$ satisfies \eqref{negativeEq} and
   \begin{equation*}
      v^{\prime}_t(t,\omega)=\, -\frac{1}{u(t,\omega)}+u^{\prime}(t,\omega)\, \int_t^\infty \big(u(s,\omega)\big)^{-2}\,\rd s\, ,
   \end{equation*}
   therefore, by \eqref{negativeBC},
   $v_t'(0,\omega)=-1$.
   It follows from \eqref{eq:G} that
   \begin{equation}\label{bound_neg}
      v^{\prime}_t(0,\omega)=-\frac{v(0,\omega)}{G(\omega)}
   \end{equation}
   which is equivalent to the boundary condition \eqref{domain_ext} with
   $\theta$ such that
   $\tan\theta = - G(\omega)$, that is,
   $\theta = -\arctan (G(\omega)) \in (\pi/2, \pi)$.
   Observe that $G(\omega)$ is decreasing and continuous in $\omega$ and
   $\lim\limits_{\omega\rightarrow\infty}G(\omega)=0$.
   Hence $\theta$ is increasing in $\omega$ and tends to $\pi$ for $\omega\to\infty$.
   For the special case $\omega=0$ we obtain $\theta = \pi/2 - \alpha_A$ where
   \begin{align}
      \label{eq:delta}
      \alpha_A = \arctan(G(0)).
   \end{align}
\end{proof}


\section{One-dimensional restriction of the harmonic oscillator and classification of all selfadjoint extensions} 
\label{sec:B}

In this section we consider the harmonic oscillator on the real line with the following restriction:
\begin{equation}\label{eq:B0}
   \Bop f := \FA f,
   \qquad
   \mD(\Bop) := \{ f \in \mD(\dop) : f(0) = 0\}.
\end{equation}
The operator $\Bop$ is closed and symmetric, but not selfadjoint.
We also define the symmetric operator
\begin{align*}
   \dop_0 f := \FA f,
   \qquad
   \mD(\dop_0) := \{ f \in C_c^\infty(\R) : 0\not\in\supp f \}.
\end{align*}

\begin{remark}
   \label{remark:A0*}
   The domain of $\dop_0$ can be viewed as
   $\mD(\dop_0) =  \mD(\dop_-^{\min})\oplus\mD(\dop_+^{\min})$, hence it is easy to see that its adjoint is given by
   \begin{align*}
      \dop_0^* f(t) &=
      \begin{cases}
	 \dop_+^{\max} f_+(t),\quad & t> 0, \\
	 \dop_-^{\max} f_-(t),\quad & t< 0,
      \end{cases}
      \\
      \mD(\dop_0^*)
      &= \{ f\in L_2(\R) \,:\, f_\pm\in \mD(\dop_\pm^{\max})\}
      = \mD(\dop_-^{\max}) \oplus \mD(\dop_+^{\max}).
   \end{align*}
\end{remark}

It should be noted that $\dop_0^*$ and the selfadjoint extensions $\Bop_{\theta}$ and $\cop_{K}$ of $\Bop$ and $\cop$ which we will calculate below are not differential operators on $L_2(\R)$ in the classical sense because functions in their domains need not be continuous or differentiable in $0$.

\begin{lemma}
   We have the chain of extensions
   $\dop_0\subset\Bop\subset\Bop^*\subset\dop_0^*$ and
   \begin{align*}
      \Bop^* f(t) &= \dop_0^* f(t), \\[1ex]
      \mD(\Bop^*) &= \{ f:\R\to\C \,:\, f_\pm\in \mD(\dop_\pm^{\max}),\ f(-0) = f(+0)\}.
   \end{align*}
\end{lemma}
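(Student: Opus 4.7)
The plan is to first pin down the chain of inclusions and the action of $\Bop^*$, then derive the domain from a boundary-form computation.

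First I would observe that $\mD(\dop_0)\subset\mD(\Bop)$ since functions supported away from $0$ lie in $C_c^\infty(\R)\subset\mD(\dop)$ and automatically satisfy the vanishing condition at $0$. Hence $\dop_0\subset\Bop$, and taking adjoints reverses the inclusion to give $\Bop^*\subset\dop_0^*$. Because $\Bop$ is symmetric (it is a restriction of the selfadjoint $\dop$), $\Bop\subset\Bop^*$, yielding the claimed chain. In particular, $\Bop^*$ acts on its domain by the same formula as $\dop_0^*$ described in Remark~\ref{remark:A0*}, so only the domain remains to be identified.

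Next, by definition $f\in\mD(\Bop^*)$ iff $f\in L_2(\R)$ and the functional $g\mapsto\langle \Bop g,f\rangle$ is continuous on $\mD(\Bop)$. Restricting to $g\in\mD(\dop_0)\subset\mD(\Bop)$ forces $f\in\mD(\dop_0^*)$, and in that case the representing vector is necessarily $\dop_0^*f$. Therefore $f\in\mD(\Bop^*)$ iff $f\in\mD(\dop_0^*)$ and
\begin{equation*}
   \langle \Bop g,f\rangle - \langle g,\dop_0^*f\rangle = 0 \qquad \text{for all } g\in\mD(\Bop).
\end{equation*}

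The key step is to evaluate this pairing. For $g\in\mD(\Bop)$ and $f\in\mD(\dop_0^*)$ I would split $\int_\R=\int_{-\infty}^0+\int_0^\infty$ and integrate by parts twice on each half-line. The boundary contributions at $\pm\infty$ vanish by the limit-point property (both $g$ and $f$ lie in $L_2$ together with $\FA g$, $\FA f$), and at $0$ one uses that $g$ and $g'$ are absolutely continuous on $\R$ with $g(0)=0$, so $g(\pm 0)=0$ and $g'(-0)=g'(+0)=g'(0)$. The result is
\begin{equation*}
   \langle \Bop g,f\rangle - \langle g,\dop_0^*f\rangle
   = \tfrac{1}{2}\,g'(0)\,\overline{\bigl(f(+0)-f(-0)\bigr)}.
\end{equation*}

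Finally, this vanishes for all $g\in\mD(\Bop)$ iff $f(+0)=f(-0)$: sufficiency is immediate, and necessity follows because $g'(0)$ can be made arbitrary within $\mD(\Bop)$, e.g.\ by taking $g(t)=c\,t\,\varphi(t)$ with $\varphi\in C_c^\infty(\R)$, $\varphi(0)=1$, yielding $g\in C_c^\infty(\R)\subset\mD(\dop)$, $g(0)=0$, $g'(0)=c$. The only genuine obstacle is the bookkeeping in the integration by parts and the justification that boundary terms at infinity vanish; both are routine given the limit-point result already established through \eqref{eq:assymptotic}.
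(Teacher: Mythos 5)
Your argument is correct and follows essentially the same route as the paper: obtain $\Bop^*\subset\dop_0^*$ from $\dop_0\subset\Bop$, then identify $\mD(\Bop^*)$ by integrating by parts on each half-line and reading off the boundary form $\tfrac12 g'(0)\,\overline{\bigl(f(+0)-f(-0)\bigr)}$, which vanishes for all $g\in\mD(\Bop)$ iff $f(-0)=f(+0)$. Your only additions — keeping the factor $\tfrac12$ and making explicit that $g'(0)$ is arbitrary via $g(t)=c\,t\,\varphi(t)$ — are refinements of details the paper leaves implicit, not a different method.
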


\begin{proof}
   Note that $\dop_0\subset\Bop$, hence $\Bop^*\subset\dop_0^*$.
   Let $f\in\mD(\Bop)$ and $g\in \mD(\dop_0^*)$
   Then $f$ and $f'$ are continuous in $0$, $f(0)=0$ and $g_\pm\in\mD(\dop_\pm^{\max})$.
   Hence integration by parts yields
   \begin{multline*}
      \langle \Bop f, g \rangle - \langle f, \dop_0^* g \rangle
      \\
      \begin{aligned}
	 &=
	 -\int_{-\infty}^0 f'' \overline g\, \rd t
	 - \int_0^{\infty} f'' \overline g\, \rd t
	 + \int_{-\infty}^0 f \overline g''\, \rd t
	 + \int_0^{\infty} f \overline g''\, \rd t
	 \\
	 &=
	 -f'(t) \overline g(t) \Big|_{-\infty}^0
	 -f'(t) \overline g(t) \Big|_0^\infty
	 +f(t) \overline g'(t) \Big|_{-\infty}^0
	 +f(t) \overline g'(t) \Big|_0^\infty
	 \\
	 &=
	 -f'(0) \overline g(-0)
	 +f'(0) \overline g(+0)
	 \\
	 &=
	 f'(0) \big( \overline g(+0) - \overline g(-0) \big).
      \end{aligned}
   \end{multline*}
   Therefore $g\in\mD(\Bop^*)$ if and only if $g$ is continuous in $0$ and in this case $\Bop^* g = \dop_0^* g$.
\end{proof}

Note that functions in the domain of $\Bop^*$ are continuous but their derivative may have a discontinuity in $0$.
\smallskip

Analogously to Lemma~\ref{lemma:halfline} we now classify all selfadjoint extensions of $\Bop$.
\begin{proposition}
   \label{prop:B0}
   The defect indices of $\Bop$ are
   $n_+(\Bop) = n_-(\Bop) = 1$.
   Hence all selfadjoint extensions of $\Bop$ are one-dimensional restrictions of $\Bop^*$.
   They are of the form
   \begin{equation*}
      \mD(\Bop_{\theta}) =
      \big\{ f\in\mD(\Bop^{*}) :
      \sqrt{2}\cos\theta\, f(0) = \sin\theta\, \big[ f'(+0) - f'(-0) \big] \big\}
   \end{equation*}
   for $\theta\in [0,\pi)$.
\end{proposition}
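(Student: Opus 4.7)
My plan is to proceed in direct analogy with the proof of Lemma~\ref{lemma:halfline}, now working with the two-dimensional quotient space $\mD(\Bop^*)/\mD(\Bop)$. First I would compute the boundary form: for $f, g \in \mD(\Bop^*)$ (both continuous at $0$), integrating by parts separately on each half-line $\R_\pm$---the boundary contributions at $\pm\infty$ vanish by the limit-point property used already in Lemma~\ref{lem:W(A)}---and then using $f(-0) = f(+0) = f(0)$ and the analogous identity for $g$ yields
\[
\langle \Bop^* f,\, g\rangle - \langle f,\, \Bop^* g\rangle = \tfrac{1}{2}\bigl[(f'(+0) - f'(-0))\overline{g(0)} - f(0)\overline{(g'(+0) - g'(-0))}\bigr].
\]
Crucially, this depends only on the boundary data $\Phi(f) := (f(0),\, f'(+0) - f'(-0)) \in \C^2$.

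Next I would check that $\Phi : \mD(\Bop^*) \to \C^2$ is surjective with kernel $\mD(\Bop)$. Surjectivity follows by exhibiting compactly supported smooth functions built from a bump function times a low-degree polynomial near $0$, realising arbitrary prescribed $f(0)$ and one-sided derivatives $f'(\pm 0)$. For the kernel, $\Phi(f) = 0$ forces $f(0) = 0$ together with $f'(+0) = f'(-0)$, which places $f$ in $\mD(\dop)$ with $f(0)=0$, i.e., $f \in \mD(\Bop)$. Hence $\dim(\mD(\Bop^*)/\mD(\Bop)) = 2$, which by von Neumann's theorem equals $n_+(\Bop) + n_-(\Bop)$. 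Since $\FA$ has real coefficients, complex conjugation intertwines the upper and lower defect subspaces, so the two indices are equal and both equal to~$1$.

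Finally, I would rescale the boundary form (multiplication by $-2\I$) to a Hermitian inner product $[\,\cdot\,,\,\cdot\,]$ of signature $(1,1)$ on $\C^2$, with $e_\pm := (1, \mp\I)$ as $\pm$-eigenvectors of equal $[\cdot,\cdot]$-modulus. Exactly as in Lemma~\ref{lemma:halfline}, the maximal neutral subspaces then form a one-parameter family $L_\theta$ indexed by $\theta \in [0,\pi)$, and a symmetric extension of $\Bop$ is selfadjoint if and only if it is the restriction of $\Bop^*$ to $\Phi^{-1}(L_\theta)$ for some $\theta$. A suitable rescaling of the $\C^2$-coordinates---chosen precisely so that the resulting parameter $\theta$ matches the $\delta$-interaction strength $c = \cot\theta/\sqrt 2$ that will appear in Section~\ref{sec:delta}---converts the membership condition $\Phi(f) \in L_\theta$ into the boundary condition $\sqrt 2\cos\theta\, f(0) = \sin\theta\,[f'(+0)-f'(-0)]$ of the statement.

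The only real obstacle is calculational bookkeeping: tracking signs and the factor $\tfrac{1}{2}$ in the boundary form, and choosing the coordinate normalization on $\C^2$ so that the resulting parametrization produces the factor $\sqrt 2$ in the statement rather than some other positive constant. The abstract indefinite-inner-product argument is identical to the one carried out in Lemma~\ref{lemma:halfline}.
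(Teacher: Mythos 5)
Your proposal is correct, and while it follows the paper's overall philosophy (boundary form plus maximal neutral subspaces), both halves are carried out by genuinely different arguments. For the defect indices, the paper uses $W(\Bop)\subseteq[0,\infty)$, so that the index is constant on $\C\setminus[0,\infty)$, and then computes $\ker(\Bop^*+\tfrac12)$ explicitly from the solutions $\phi_\pm$ of Lemma~\ref{lem:W(A)}; you instead identify $\mD(\Bop^*)/\mD(\Bop)\cong\C^2$ via the boundary map $\Phi$ and combine the von Neumann formula $n_++n_-=\dim\big(\mD(\Bop^*)/\mD(\Bop)\big)$ with the conjugation symmetry of $\FA$ to get $n_\pm=1$. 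Both are sound; the paper's route is more explicit and recycles formulas already in hand, yours is shorter and needs no solution formulas. For the classification, the paper keeps the full boundary vector $(f(0),f'(-0),f'(+0))\in\C^3$, where the form is degenerate with Gram matrix eigenvalues $0,\pm\sqrt2$, so maximal neutral subspaces are two-dimensional and contain $\ker G$, and the $\sqrt2$ in the boundary condition emerges from the eigenvectors; you quotient out the degenerate direction at the outset and obtain a nondegenerate form of signature $(1,1)$ on $\C^2$, whose maximal neutral subspaces are exactly the complex lines spanned by real vectors, making the classification essentially immediate. Your treatment of the $\sqrt2$ as a mere normalisation of the parameter is legitimate: as $\theta$ runs over $[0,\pi)$, the pair $(\sqrt2\cos\theta,\sin\theta)$ runs over all real directions exactly once, so the stated family coincides with the family of all real conditions $c_1 f(0)=c_2\,[f'(+0)-f'(-0)]$. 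Two small points to tidy: the functions exhibiting surjectivity of $\Phi$ cannot be globally smooth, since a prescribed jump $f'(+0)\neq f'(-0)$ forces them to be only piecewise smooth (smooth on each closed half-line and continuous at $0$); and the sign conventions (whether to scale by $\pm2\I$, and which of $(1,\mp\I)$ is the positive vector) need the bookkeeping you already flagged, though neither issue affects the set of neutral subspaces or the conclusion.
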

\begin{proof}
   Observe that $\Bop$ is a restriction of $\dop$, hence for the numerical ranges we have the inclusion $W(\Bop)\subset W(\dop) \subset [0,\infty)$,
   so the defect index of $\Bop$ is constant in $\C\setminus [0,\infty)$
   and it suffices to show that $\dim(\ker(\Bop^*+\frac{1}{2})) = 1$.

   From the proof of Lemma~\ref{lem:W(A)} it is clear that every $L_2$-solution of $(\FA+\frac{1}{2})f=0$ must be of the form $f=\alpha_+ \phi_+ \chi_{(-\infty, 0)} + \alpha_- \phi_- \chi_{(0,\infty)}$ with $\alpha_\pm\in\C$ and $\phi_\pm$ as in \eqref{eq:phi2} and \eqref{eq:phi3}.
   Clearly every such function belongs to $\mD(\dop_0^*)$ and
   $f(-0) = \frac{\alpha_-}{2} \sqrt{\pi}$ and
   $f(-0) = \frac{\alpha_+}{2} \sqrt{\pi}$.
   For $f\in\mD(\Bop^*)$ must have that $f(-0) = f(+0)$.
   Therefore
   \begin{align*}
      \ker\Big(\Bop^*+\frac{1}{2}\Big)
      = \linspan\{ \phi_+ \chi_{(-\infty, 0)} + \phi_- \chi_{(0,\infty)} \}
   \end{align*}
   which shows that $n(\Bop, -\frac{1}{2}) =1$.

   Let us now determine all selfadjoint extensions of $\Bop$.
   This is equivalent to determine all selfadjoint restrictions of $\Bop^*$.
   Let $f,g\in\mD(\Bop^*)$.
   Performing integration by parts we find
   \begin{multline*}
      \langle f, \Bop^* g \rangle - \langle \Bop^* f, g \rangle
      \\
      \begin{aligned}
	 &=
	 \int_{-\infty}^0 f'' \overline g \rd t
	 + \int_0^{\infty} f'' \overline g \rd t
	 - \int_{-\infty}^0 f \overline g'' \rd t
	 - \int_0^{\infty} f \overline g'' \rd t
	 \\
	 &=
	 f'(t) \overline g(t) \Big|_{-\infty}^0
	 +f'(t) \overline g(t) \Big|_0^\infty
	 -f(t) \overline g'(t) \Big|_{-\infty}^0
	 -f(t) \overline g'(t) \Big|_0^\infty
	 \\
	 &=
	 f'(-0) \overline g(-0)
	 -f'(+0) \overline g(+0)
	 -f(-0) \overline g'(-0)
	 +f(+0) \overline g'(+0)
	 \\
	 &=
	 \big[ f'(-0) - f'(+0) \big] \overline g(0)
	 - \big[  \overline g'(-0) -\overline g'(+0) \big] f(0).
      \end{aligned}
   \end{multline*}
   Set
   \begin{equation*}
      G=
      \begin{pmatrix}
	 0   & -\I & \I \\
	 \I  & 0   & 0  \\
	 -\I & 0   & 0  \\
      \end{pmatrix}.
   \end{equation*}
   Then $f,g$ belong to a particular selfadjoint extension of $\Bop$ if and only if
   $(f(0),\, f'(-0),\, f'(+0))^t$ and
   $(g(0),\, g'(-0),\, g'(+0))^t$ belong to a maximal neutral subspace of $(\C^3, [\cdot,\,\cdot])$ where
   \begin{align*}
      \left[
      \begin{pmatrix} x_1\\ x_2\\ x_3
      \end{pmatrix},\
      \begin{pmatrix} y_1\\ y_2\\ y_3
      \end{pmatrix}
      \right]
      &:=
      \left\langle
      G
      \begin{pmatrix} x_1\\ x_2\\ x_3
      \end{pmatrix}
      ,\
      \begin{pmatrix} y_1\\ y_2\\ y_3
      \end{pmatrix}
      \right\rangle
      =
      \I \left[
      x_1 \big( \overline y_3 - \overline y_2 \big)
      +
      \overline y_1 \big( x_2 - x_3 \big)
      \right].
   \end{align*}
   The eigenvalues of $G$ are $0$ and $\pm\sqrt{2}$ with eigenspaces
   \begin{align*}
      \ker G &= \linspan\left\{ (0,\, 1,\, 1)^t \right\},
      \\
      L_+ := \ker (G -\sqrt{2}) &= \linspan\left\{ (\I\sqrt{2},\, -1,\, 1)^t \right\},
      \\
      L_- := \ker (G +\sqrt{2}) &= \linspan\left\{ (-\I\sqrt{2},\, -1,\, 1)^t \right\}.
   \end{align*}
   Note that $L_\pm$ are maximal positive and maximal negative subspaces of $\C^3$ respectively.
   Hence any maximal neutral subspace of $\C^3$ has dimension 2.
   They are of the form $\ker G \oplus \{ v + Kv: v\in \ker(G-\sqrt{2}) \}$ where $K$ is an isometry from $L_+$ to $L_-$.
   Clearly all such isometries are of the form $v_+\mapsto \e^{-2\I\theta} v_-$ where $v_\pm\in L_\pm$.
   In summary, all maximal neutral subspaces are
   \begin{equation*}
      L_\theta
      = \linspan \left\{
      \begin{pmatrix} 0 \\ 1 \\ 1
      \end{pmatrix},\
      \begin{pmatrix} \I\sqrt{2} \\ -1 \\ 1
      \end{pmatrix}
      +\e^{2\I\theta}
      \begin{pmatrix} -\I\sqrt{2} \\ -1 \\ 1
      \end{pmatrix}
      \right\},
      \qquad
      \theta\in [0,\pi).
   \end{equation*}
   This can be rewritten as
   \begin{align*}
      L_\theta
      &= \linspan \left\{
      \begin{pmatrix} 0 \\ 1 \\ 1
      \end{pmatrix},\
      \begin{pmatrix} \sqrt{2} \sin(\theta)\\ -\cos(\theta) \\ \cos(\theta)
      \end{pmatrix}
      \right\}
      =
      \left(
      \linspan\left\{
      \begin{pmatrix} \sqrt{2} \cos(\theta)\\ \sin(\theta) \\ -\sin(\theta)
      \end{pmatrix}
      \right\}
     \right) ^\perp.
   \end{align*}
   It follows that $f\in\mD(\Bop_{\theta})$ if and only if $f\in\mD(\Bop^*)$ and
   \begin{equation}
      \label{eq:BopBC}
      \sqrt{2} \cos\theta\, f(0) + \sin\theta\, \big[ f'(-0) - f'(+0) \big] = 0.
   \end{equation}
\end{proof}

\begin{remark}
   \begin{enumerate}
      \item
      Observe that functions $f$ in $\mD(\Bop)^*$ which satisfy
      \\
      $\{(f(0), f'(-0), f'(+0)\}\in \ker G$ belong to $\mD(\Bop)$.

      \item
      The selfadjoint extensions of $\Bop$ can be divided into the following cases:
      \begin{enumerate}
	 \item $\theta = \frac{\pi}{2}$.
	 In this case the boundary condition \eqref{eq:BopBC} simplifies to
	 \begin{equation*}
	    f'(-0) - f'(+0) = 0.
	 \end{equation*}
	 That is, $f$ and $f'$ are continuous and we obtain the classical harmonic oscillator: $\Bop_{\pi/2} = \dop$.

	 \item $\theta = 0$.
	 In this case the boundary condition \eqref{eq:BopBC} simplifies to
	 $f(0) = 0.$

	 \item $\theta \in (0, \pi)\setminus\{\pi/2\}$.
	 The boundary condition \eqref{eq:BopBC} can be written as
	 \begin{equation*}
	    \label{eq:BopBCsimplified}
	    f(0)
	    =
	    \frac{ \tan(\theta) }{\sqrt{2} }
	    \big[ f'(+0) - f'(-0) \big].
	 \end{equation*}
	 Hence any function in $\mD(\Bop_{\theta})$ is continuous but its derivative has a jump in $t=0$ which is proportional to the value of $f$ in $0$.
	 Two different selfadjoint extensions of $\Bop$ have different constants of proportionality.
      \end{enumerate}

      \item
      For every $n\in\N_0$, the function $\psi_{2n+1}$ from \eqref{eq:recursion} is an eigenfunction of $\Bop_{\theta}$ with eigenvalue $2n+3/2$.
      So the odd eigenvalues of the harmonic oscillator are not affected by the boundary condition at $0$.
   \end{enumerate}
\end{remark}

An interpretation of these operators as a differential operator with a $\delta$-interaction at $0$ is given in Section~\ref{sec:delta}.
\smallskip

Let $\lambda\in\C$.
By the asymptotic expansion \eqref{eq:assymptotic} the equation $\FA y = \lambda y$ has square integrable solutions $y_\pm$ on $\R_\pm$ which are unique up to a constant factor.
Let us define
\begin{align}\label{eq:y}
   y(x) =
   \begin{cases}
      y_+(x), & x\ge0,\\
      y_+(-x), & x<0.
   \end{cases}
\end{align}
Then clearly $y\in\mD(\Bop^*)$ and it is, up to a constant factor, the unique solution of $(\Bop^* -\lambda)y =0$.
Moreover, $y\in \mD(\Bop_{\theta})$ where $\theta$ is the unique number in $[0,\pi)$ such that $\sqrt{2}\cos\theta\, f(0) = \sin\theta\, [ f'(+0) - f'(-0)]$.
\smallskip

This shows that, as in the case of selfadjoint extensions of $\dop_0$, every $\lambda\in\R$ appears as eigenvalue of exactly one selfadjoint extension of $\Bop$ and that every eigenvalue is simple.
Moreover, any given $\Bop_{\theta}$ can have at most one negative eigenvalue.

As in Lemma~\ref{lemma:negEVA} we can identify all $\theta$ such that $\Bop_{\theta}$ has a negative eigenvalue.
\begin{lemma}\label{lemma:negEVB}
   Let $\Bop_{\theta}$ as in Proposition~\ref{prop:B0} and let $\alpha_B = \arctan(\frac{G(0)}{\sqrt{2}})$ with $G$ as in \eqref{eq:G}.
   Then $\Bop_{\theta}$ has the eigenvalue $0$ if and only if $\theta= \pi-\alpha_B$.
   It has a negative eigenvalue if and only if $\theta\in (\pi-\alpha_B,\,\pi)$.
   Moreover, for $j=1,2$, let $\lambda_j$ be eigenvalues of $\Bop_{\theta_j}$.
   If $\lambda_2<\lambda_1<0$, then $\pi-\alpha_B < \theta_1 < \theta_2<\pi$.
\end{lemma}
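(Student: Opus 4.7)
The plan is to mirror the strategy of Lemma~\ref{lemma:negEVA}: for each $\omega\ge 0$, construct the (essentially unique) $L_2$ eigenfunction of $\Bop^*$ corresponding to the eigenvalue $-\omega^2$ and determine which boundary condition at $0$ it satisfies, thereby identifying the unique $\theta(\omega)$ for which this eigenfunction belongs to $\mD(\Bop_\theta)$.

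The key observation is that, because functions in $\mD(\Bop^*)$ are continuous at $0$, any $L_2$ eigenfunction of $\Bop^*$ with eigenvalue $-\omega^2$ must be the even extension of the function $v(\cdot,\omega)$ constructed in the proof of Lemma~\ref{lemma:negEVA}. Indeed, the equation $(\FA+\omega^2)f=0$ has a one-dimensional space of $L_2$ solutions on each half-line, spanned on $\R_+$ by $v(t,\omega)$ and on $\R_-$ by $v(-t,\omega)$ (by the symmetry of $\FA$). Since $v(0,\omega)=G(\omega)$ on both sides, the function
\begin{equation*}
   f(t,\omega) := v(|t|,\omega)
\end{equation*}
is continuous at $0$, lies in $\mD(\dop_0^*)$, and therefore in $\mD(\Bop^*)$, and is (up to scalar) the only such eigenfunction.

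Next I would compute the boundary data. From the proof of Lemma~\ref{lemma:negEVA} we have $f(0,\omega)=G(\omega)$ and $v'_t(0,\omega)=-1$, hence
\begin{equation*}
   f'(+0,\omega) = v'_t(0,\omega) = -1,
   \qquad
   f'(-0,\omega) = -v'_t(0,\omega) = 1.
\end{equation*}
Substituting into the boundary condition characterising $\mD(\Bop_\theta)$ in Proposition~\ref{prop:B0} gives
\begin{equation*}
   \sqrt{2}\cos(\theta)\, G(\omega) = \sin(\theta)\bigl[\, -1 - 1\,\bigr],
\end{equation*}
equivalently $\tan\theta = -G(\omega)/\sqrt{2}$, which selects a unique $\theta(\omega)\in(\pi/2,\pi)$ (the negative sign forces this branch). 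For $\omega=0$ this yields $\theta(0)=\pi-\arctan(G(0)/\sqrt{2})=\pi-\alpha_B$, proving the first assertion.

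Finally, for the monotonicity statements, I would invoke that $G$ is continuous and strictly decreasing on $[0,\infty)$ with $\lim_{\omega\to\infty}G(\omega)=0$ (established inside the proof of Lemma~\ref{lemma:negEVA}). It follows that $\omega\mapsto -G(\omega)/\sqrt{2}$ is strictly increasing from $-G(0)/\sqrt{2}$ to $0^-$, so $\theta(\omega)$ is strictly increasing from $\pi-\alpha_B$ to $\pi$ as $\omega$ ranges over $[0,\infty)$. Combined with the already-noted fact that each $\Bop_\theta$ has at most one non-positive eigenvalue, this gives a bijection between $\theta\in[\pi-\alpha_B,\pi)$ and eigenvalues $\lambda=-\omega^2\in(-\infty,0]$, and since $\lambda$ is strictly decreasing in $\omega$, larger $\theta$ corresponds to smaller (more negative) eigenvalue, yielding the last claim. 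The only mildly delicate point is confirming that the boundary condition uniquely determines the branch $\theta\in(\pi/2,\pi)$ rather than $[0,\pi/2]$, which follows from $G(\omega)>0$ forcing $\tan\theta<0$; no substantial obstacle is anticipated beyond this bookkeeping.
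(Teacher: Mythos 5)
Your proposal is correct and follows essentially the same route as the paper: the paper also takes the even extension $y$ of $v(\cdot,\omega)$ (via \eqref{eq:y}), plugs it into the boundary condition of Proposition~\ref{prop:B0} to obtain $\tan\theta=-G(\omega)/\sqrt{2}$, and then uses the monotonicity and decay of $G$ to conclude. Your write-up merely spells out the boundary-data computation and the branch selection $\theta\in(\pi/2,\pi)$ in more detail than the paper does.
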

\begin{proof}
Let $\lambda = -\omega^2<0$ and $y$ as in \eqref{eq:y} with $y_+ = v(\cdot, \omega)$ (cf. \eqref{def_v}).
Then $y$ is an eigenfunction of $\Bop_{\theta}$ with eigenvalue $\lambda$ if and only if $\tan(\theta) = -\frac{G(\omega)}{\sqrt{2}}$.
Hence negative eigenvalues occur if and only if
$\theta \in (\pi - \arctan(\frac{G(0)}{\sqrt{2}}),\, \pi)$.
Since $G$ is decreasing in $\omega$ with $\lim_{\omega\to\infty}G(\omega) = 0$, also the last claim follows.
\end{proof}

For $\lambda\in -(2\N-\frac{1}{2})$ we can calculate the corresponding eigenfunctions by a recursion formula.

\begin{lemma}
   Let $n\in\N$ and $\phi_\pm$ as in \eqref{eq:phi2} and \eqref{eq:phi3}.
   Set $\phi_{\pm,n}(t) := \left( \frac{\rd}{\rd t} + t \right)^n \phi_\pm(t)$ for $t\in\R_\pm$ and
   \begin{equation*}
      u_n(t) :=
      \begin{cases}
	 \phi_{+,n}(t), & t > 0,\\[2ex]
	 \phi_{-,n}(t), & t < 0.
      \end{cases}
   \end{equation*}
   Then $u_{2n}\in\mD(\Bop^*)$.
   That is, $u_{2n}$ defines a selfadjoint extension $\Bop_{\theta}$ of $\Bop$ and it is an eigenfunction of $\Bop_{\theta}$ with eigenvalue $-2n-\frac{1}{2}$.
\end{lemma}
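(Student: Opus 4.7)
The argument rests on two ingredients: the algebraic shift identity from the Remark at the start of the paper, and the reflection symmetry $\phi_-(t)=\phi_+(-t)$ from \eqref{eq:phi3}. Since $\phi_\pm$ solves $(\FA+\tfrac{1}{2})\phi_\pm=0$, the Remark immediately yields $\FA\phi_{\pm,n}=-(n+\tfrac{1}{2})\phi_{\pm,n}$, so on each half-line $u_n$ satisfies the differential equation with the claimed eigenvalue. It then remains to check that $u_{2n}\in L_2(\R)$, is absolutely continuous with absolutely continuous derivative on each of $\R_\pm$, and is \emph{continuous at $0$}; this is exactly $u_{2n}\in\mD(\Bop^*)$.

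For the $L_2$-step I would use the operator identity $(\tfrac{\rd}{\rd t}+t)f=\e^{-t^2/2}\tfrac{\rd}{\rd t}(\e^{t^2/2}f)$ and iterate it to get $\phi_{+,n}(t)=\e^{-t^2/2}F^{(n)}(t)$, where $F(t)=\e^{t^2}\int_t^\infty \e^{-s^2}\,\rd s$. The recursion $F'=2tF-1$ together with an induction shows that $F^{(n)}$ is polynomially bounded on $[0,\infty)$, which gives $|\phi_{+,n}(t)|\le C(1+t^N)\e^{-t^2/2}$ and hence $\phi_{+,n}|_{\R_+}\in\mD(\dop_+^{\max})$. (An equivalent way to conclude is to use \eqref{eq:assymptotic}: the $L_2(\R_+)$-solutions of $\FA u=-(n+\tfrac{1}{2})u$ form a one-dimensional subspace, and $\phi_{+,n}$ is non-zero because $\phi_+$ is not a polynomial times $\e^{-t^2/2}$, so it cannot be annihilated by $(\tfrac{\rd}{\rd t}+t)^n$.) The analogous statement for $\phi_{-,n}|_{\R_-}$ follows by the reflection symmetry, giving $u_n\in\mD(\dop_0^*)$.

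For continuity at $0$ set $a:=\tfrac{\rd}{\rd t}+t$ and $(Rf)(t):=f(-t)$. A one-line calculation gives $aR=-Ra$, hence $a^nR=(-1)^nRa^n$. Applying this to $\phi_+$ and using $R\phi_+=\phi_-$ produces
\[
\phi_{-,n}(t)=(-1)^n\phi_{+,n}(-t),\qquad \phi_{-,n}'(t)=(-1)^{n+1}\phi_{+,n}'(-t).
\]
Evaluating at $t=0$ for the even index $2n$ yields $\phi_{-,2n}(0)=\phi_{+,2n}(0)$ and $\phi_{-,2n}'(0)=-\phi_{+,2n}'(0)$; the first of these is exactly $u_{2n}(-0)=u_{2n}(+0)$, which combined with $u_{2n}\in\mD(\dop_0^*)$ places $u_{2n}\in\mD(\Bop^*)$.

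Finally, $u_{2n}$ is a non-trivial element of $\mD(\Bop^*)$ satisfying $\Bop^*u_{2n}=-(2n+\tfrac{1}{2})u_{2n}$, a real eigenvalue, so by Proposition~\ref{prop:B0} the boundary vector $\bigl(u_{2n}(0),u_{2n}'(-0),u_{2n}'(+0)\bigr)^t$ lies in a unique maximal neutral subspace $L_\theta$; this $\theta\in[0,\pi)$ singles out the selfadjoint extension $\Bop_\theta$ of which $u_{2n}$ is an eigenfunction. From the derivative parity $u_{2n}'(-0)=-u_{2n}'(+0)$ one can read off $\tan\theta=u_{2n}(0)/(\sqrt{2}\,u_{2n}'(+0))$ whenever the denominator is non-zero (and $\theta=0$ otherwise). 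The main technical nuisance in the whole argument is the polynomial growth bound needed for $L_2$-integrability; the conceptual heart is the parity identity $\phi_{-,n}(0)=(-1)^n\phi_{+,n}(0)$, which is precisely what forces the construction to work only for even indices.
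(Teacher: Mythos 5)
Your argument follows the same route as the paper: the eigenvalue--shift identity from the introductory Remark gives $\FA\phi_{\pm,n}=-(n+\frac{1}{2})\phi_{\pm,n}$, and the reflection anticommutation $\left(\frac{\rd}{\rd t}+t\right)R=-R\left(\frac{\rd}{\rd t}+t\right)$ gives the parity relation $\phi_{-,n}(t)=(-1)^n\phi_{+,n}(-t)$, hence continuity of $u_{2n}$ at $0$ and $u_{2n}\in\mD(\Bop^*)$. Your explicit bound $\phi_{+,n}(t)=\e^{-t^2/2}F^{(n)}(t)$ with $F$ polynomially bounded on $[0,\infty)$ supplies the domain/$L_2$ detail that the paper compresses into ``clearly $u_n\in\mD(\dop_0^{\max})$'', which is a welcome addition. (One caveat on your parenthetical ``equivalent way'' via \eqref{eq:assymptotic}: knowing that $\phi_{+,n}$ is a nonzero solution of $\FA u=-(n+\frac{1}{2})u$ does not by itself place it in the one-dimensional subspace of $L_2(\R_+)$-solutions; your primary bound is what actually does the work.)

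The genuine gap is in the last step. Realness of the eigenvalue only tells you that the boundary vector $\bigl(u_{2n}(0),u_{2n}'(-0),u_{2n}'(+0)\bigr)^t$ is neutral, hence lies in \emph{at least} one maximal neutral subspace $L_\theta$; it lies in a \emph{unique} $L_\theta$ only if it is not in $\ker G=\linspan\{(0,1,1)^t\}$, i.e.\ only if $u_{2n}'(-0)\neq u_{2n}'(+0)$, which by your parity relation means $u_{2n}'(+0)\neq 0$. You never establish this, and your fallback ``$\theta=0$ otherwise'' is wrong: if $u_{2n}'(+0)=0$ and $u_{2n}(0)\neq 0$, condition \eqref{eq:BopBC} forces $\theta=\pi/2$ (the unperturbed operator $\dop$), whereas $\theta=0$ corresponds to $u_{2n}(0)=0$. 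The paper closes exactly this hole with one observation you omit: if $u_{2n}'(-0)=u_{2n}'(+0)$, then $u_{2n}\in\mD(\dop)$ and $-2n-\frac{1}{2}$ would be an eigenvalue of $\dop$, contradicting \eqref{eq:Aspectrum}; this simultaneously rules out the degenerate case $u_{2n}(0)=u_{2n}'(\pm 0)=0$. Add that argument and your proof is complete, with $\tan\theta=u_{2n}(0)\big/\bigl(\sqrt{2}\,u_{2n}'(+0)\bigr)$ as you state.
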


\begin{proof}
   Clearly $u_n\in\mD(\dop_0^{\max})$.
   It is easy to check that
   $\phi_+(t) = \phi_-(-t)$ for $t>0$.
   Moreover, a straightforward calculation shows
   \begin{align*}
      \left(\FA+n+\frac{1}{2} \right)\phi_{\pm,n} = 0.
   \end{align*}
   Hence
   $\phi_{+,n}(t) = (-1)^n \phi_{-,n}(-t)$ for $t>0$.
   So $u_{2n}$ is continuous in $0$, and therefore it belongs to $\mD(\Bop^*)$.
   If in addition we had $u'_{2n}(-0) = u'_{2n}(+0)$, then $u_{2n}\in\mD(\dop)$, and $-2n-\frac{1}{2}$ would be an eigenvalue of $\dop$, in contradiction to \eqref{eq:Aspectrum}.
\end{proof}

\section{Two-dimensional restriction of the harmonic oscillator and classification of its selfadjoint extensions} 
\label{sec:C}

Let us restrict the harmonic oscillator on the real line further.
We consider the following restriction $\cop$ of the selfadjoint operator $\dop$:
\begin{align*}
   \cop f := \dop f,
   \qquad
   \mD(\cop) := \{ f \in \mD(\dop) : f(0) = f'(0) = 0\}.
\end{align*}
The operator $\cop$ is closed and symmetric, but not selfadjoint.
It is easy to see
\begin{align*}
   \cop^* = \dop_0^*.
\end{align*}
The operator $\cop$ is closely related to the harmonic oscillator on the half lines $\R_\pm$ because
\begin{equation}
   \label{eq:CA}
   \cop = (\cop^*)^* = (\dop_0^*)^* =
   \overline{\dop_0} = \overline{\dop_-^{\min}}\oplus \overline{\dop_+^{\min}}.
\end{equation}

Analogously to Lemma~\ref{lemma:halfline} and Proposition~\ref{prop:B0} we now classify all selfadjoint extensions of $\cop$.
Observe that the selfadjoint extensions of $\cop$ are exactly those of $\dop_0$.
\smallskip

Recall that $U(2)$ is the set of all unitary $2\times 2$ matrices.
\begin{proposition}
   The defect indices of $\cop$ are
   $n_+(\cop) = n_-(\cop) =2$.
   Hence all selfadjoint extensions of $\cop$ are two-dimensional restrictions of $\cop^*$.
   There is a bijection from $U(2)$ to the set of all selfadjoint extensions of $\cop$ given as follows:
   For every $K = (k_{jk})_{j,k=1}^2 \in U(2)$, the operator
   \begin{align}
      \nonumber
      \cop_{K} f &= \FA f,
      \\[2ex]
      \label{eq:Kbc}
      \mD(\cop_{K}) &=
      \left\{ f\in\mD(\cop^*) \ :\
      \begin{aligned}
	 0 &=
	 ( 1 - k_{11}) f(-0)
	 + \I(1 + k_{11} ) f'(-0)
	 \\
	 &\phantom{=}+ \I k_{12} f(+0)
	 - k_{12} f'(+0),
	 \\[1ex]
	 0 &=
	 -k_{21} f(-0)
	 + \I k_{21} f'(-0)
	 \\
	 &\phantom{=}+\I(1+k_{22}) f(+0)
	 + (1-k_{22}) f'(+0)
      \end{aligned}
      \right \}
   \end{align}
   is a selfadjoint extension of $\cop$.
   There are no other selfadjoint extensions and $\cop_K=\cop_{\widetilde K}$ if and only if $K=\widetilde K$.
\end{proposition}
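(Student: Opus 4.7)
I would follow the template of Lemma~\ref{lemma:halfline} and Proposition~\ref{prop:B0}: translate selfadjointness into maximal neutrality in a finite-dimensional indefinite inner product space. The defect indices come essentially for free: by \eqref{eq:CA}, $\cop = \overline{\dop_-^{\min}} \oplus \overline{\dop_+^{\min}}$, and since Lemma~\ref{lem:W(A)} gives $n_\pm(\overline{\dop_\pm^{\min}})=1$, we have $n_\pm(\cop)=2$. Hence every selfadjoint extension of $\cop$ is a two-dimensional restriction of $\cop^*=\dop_0^*$, and we should expect a four-real-parameter family.

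For $f,g\in\mD(\cop^*)$, integration by parts on $\R_\pm$ together with the limit point property of $\FA$ at $\pm\infty$ gives
\begin{equation*}
   \langle \cop^* f, g\rangle - \langle f, \cop^* g\rangle = \tfrac{1}{2}\bigl[f(-0)\overline{g'(-0)}-f'(-0)\overline{g(-0)}-f(+0)\overline{g'(+0)}+f'(+0)\overline{g(+0)}\bigr].
\end{equation*}
Encoding $f$ by its boundary vector $\mathbf{f}=(f(-0),f'(-0),f(+0),f'(+0))^t\in\C^4$ and defining the Hermitian form
\begin{equation*}
   [\mathbf{x},\mathbf{y}] := \I\bigl[x_1\overline{y_2}-x_2\overline{y_1}-x_3\overline{y_4}+x_4\overline{y_3}\bigr],
\end{equation*}
a restriction of $\cop^*$ is selfadjoint iff the set of boundary vectors of its domain is maximal neutral in $(\C^4,[\cdot,\cdot])$. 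Computation of the Gram matrix (block diagonal with two blocks of opposite sign) shows the signature is $(2,2)$, so maximal neutral subspaces are exactly two-dimensional.

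The standard description of these subspaces is $L_K=\{v+Kv:v\in L_+\}$, where $L_\pm$ denote the maximal positive/negative subspaces of $[\cdot,\cdot]$ and $K:L_+\to L_-$ is a linear map with $[v,v]+[Kv,Kv]=0$. Since $L_+$ and $L_-$ are $[\cdot,\cdot]$-orthogonal and two-dimensional, after fixing bases that are orthonormal in $\pm[\cdot,\cdot]$ (e.g.\ $\{(1,\I,0,0),(0,0,1,-\I)\}$ for $L_+$ and $\{(1,-\I,0,0),(0,0,1,\I)\}$ for $L_-$) the isometry condition becomes unitarity in the Euclidean sense, so $K\in U(2)$; the map $K\mapsto L_K$ is bijective because one recovers $K$ from $L_K$ by projecting onto $L_\pm$. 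Writing out a general element of $L_K$ in coordinates and eliminating the two free parameters yields two linear relations on $(f(-0),f'(-0),f(+0),f'(+0))$, which I would then match against the two displayed equations in \eqref{eq:Kbc}. I expect the bookkeeping of this last step to be the main technical obstacle: the precise location of the factors $\pm\I$ and of the entries $k_{ij}$ in \eqref{eq:Kbc} is sensitive to the basis normalization, and the cleanest verification is probably to check directly that, for every $K\in U(2)$, the two equations in \eqref{eq:Kbc} define a two-dimensional subspace on which $[\cdot,\cdot]$ vanishes, and then invoke the injectivity already established to conclude that this gives all selfadjoint extensions.
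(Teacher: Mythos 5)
Your plan is essentially the paper's own proof: defect indices $n_\pm(\cop)=2$ via \eqref{eq:CA} and the half-line result of Lemma~\ref{lem:W(A)}, the boundary form on $\C^4$ with signature $(2,2)$, maximal neutral subspaces realized as graphs of unitaries $K\colon L_+\to L_-$, and then a coordinate computation producing \eqref{eq:Kbc}; the factor $\tfrac12$ you carry along is an inessential rescaling of the form. Two caveats on the final step. First, your bases for $L_\pm$ differ from the paper's (there $v_2=\tfrac1{\sqrt2}(0,0,\I,1)^t$ and $w_2=\tfrac1{\sqrt2}(0,0,-\I,1)^t$) by phase factors, so with your normalization the elimination yields \eqref{eq:Kbc} only after replacing $K$ by $D_1KD_2$ for fixed diagonal phase matrices; this is harmless for the existence of a bijection but must be accounted for if you want the displayed boundary conditions with the entries of $K$ itself. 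Second, the fallback you suggest --- check that for each $K\in U(2)$ the two equations in \eqref{eq:Kbc} cut out a two-dimensional neutral subspace and then ``invoke injectivity'' --- establishes that each $\cop_K$ is a selfadjoint extension and that distinct $K$ give distinct extensions, but it does not by itself prove the claim that there are \emph{no other} selfadjoint extensions: an injective map from $U(2)$ into the set of maximal neutral subspaces need not be onto. Exhaustiveness requires showing that every maximal neutral subspace arises from \eqref{eq:Kbc}, which is exactly what the paper's computation of $L_K$ in standard coordinates together with the orthogonal-complement description \eqref{eq:orthogonal} provides (equivalently, you must exhibit the explicit reparametrization identifying your abstract $U(2)$-family with the family \eqref{eq:Kbc}). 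Your primary plan does this; the shortcut alone would leave a gap.
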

For a parametrisation of the selfadjoint extensions with four real parameters, see the corollary after the proof of this proposition.

\begin{proof}
   From Lemma~\ref{lemma:halfline} we know that
   $n_+(\dop_\pm^{\min}) = n_-(\dop_\pm^{\min}) = 1$, so
   \begin{align*}
      \dim(\ker(\dop_\pm^{\max} - \I)) = \dim(\ker(\dop_\pm^{\max} + \I)) = 1.
   \end{align*}
   Hence there are functions $\psi_\pm\neq 0$ such that
   $\ker(\dop_\pm^{\max} -\I) = \linspan\{ \psi_{\pm} \}$.
   From Remark~\ref{remark:A0*} it is clear that $\eta\in\ker(\dop_0^*-\I)$ if and only if $\eta|_{\R_\pm} \in \ker(\dop_\pm^{\max} -\I)$.
   Therefore
   \begin{align*}
      \ker(\dop_0^* -\I) = \linspan\{ \chi_{\R_-}\psi_-,\  \chi_{\R_+}\psi_+ \}
   \end{align*}
   and $n_+(\dop_0^*) = 2$.
   Analogously $n_-(\dop_0^*) = 2$ can be shown.

   Now let us determine all selfadjoint extensions of $\dop_0$ which is equivalent to determine all selfadjoint restrictions of $\dop_0^*$.
   Again we apply integration by parts and find
   \begin{multline*}
      \langle f, \dop_0^* g\rangle
      - \langle \dop_0^* f, g\rangle
      \\
      =
      f(+0)\overline g'(+0) - f'(+0)\overline g(+0)
      -f(-0)\overline g'(-0) + f'(-0)\overline g(-0)
   \end{multline*}
   for all $f,g\in\mD(\dop_0^*)$.

   Hence $f,g$ belong to a particular selfadjoint extension of $\dop_0$ if and only if
   $(f(-0),\, f'(-0),\, f(+0),\, f'(+0))^t$ and
   $(g(-0),\, g'(-0),\, g(+0),\, g'(+0))^t$ belong to a maximal neutral subspace of $(\C^4, [\cdot,\,\cdot])$ where
   \begin{align*}
      \left[
      \begin{pmatrix} x_1\\ x_2\\ x_3\\ x_4
      \end{pmatrix},\
      \begin{pmatrix} y_1\\ y_2\\ y_3\\ y_4
      \end{pmatrix}
      \right]
      &:=
      \left\langle
      \begin{pmatrix}
	 0 & -\I & 0 & 0 \\
	 \I & 0 & 0 & 0\\
	 0 & 0 & 0 & \I\\
	 0 & 0 & -\I & 0
      \end{pmatrix}
      \begin{pmatrix} x_1\\ x_2\\ x_3\\ x_4
      \end{pmatrix}
      ,\
      \begin{pmatrix} y_1\\ y_2\\ y_3\\ y_4
      \end{pmatrix}
      \right\rangle
      \\[1ex]
      &=
      \I( x_1\overline y_2 - x_2\overline y_1
      -x_3\overline y_4 + x_4\overline y_3).
   \end{align*}
   Every maximal neutral subspace has dimension 2.
   Let
   \begin{alignat*}{2}
      v_1 &= \frac{1}{\sqrt{2}}(1,\,\I,\, 0,\,0)^t,\
      &v_2 &= \frac{1}{\sqrt{2}}(0,\, 0,\, \I,\,1)^t,
      \\
      w_1 &= \frac{1}{\sqrt{2}}(1,\,-\I,\, 0,\,0)^t,\
      &w_2 &= \frac{1}{\sqrt{2}}(0,\, 0,\, -\I,\,1)^t.
   \end{alignat*}
   Then $L_+ = \linspan\{ v_1,\, v_2\}$ is a maximal positive and
   $L_- = \linspan\{ w_1,\, w_2\}$ is a maximal negative subspace of $(\C^4,\ [\cdot,\cdot])$ and all maximal neutral subspaces are of the form
   \begin{align*}
      L_K = \{ v + Kv : v\in L_+ \}
      &= \{ w + K^*w : w\in L_- \}^{[\perp]}
      \\
      &= \{ w - K^*w : w\in L_- \}^\perp
   \end{align*}
   where $K$ is a unitary operator from $L_+$ to $L_-$ and $[\perp]$ denotes the orthogonal complement with respect to the inner product $[\,\cdot\,, \,\cdot\,]$.
   With respect to the basis vectors $v_1, v_2, w_1, w_2$, $K$ can be written as quadratic matrix
   \begin{equation}
      \label{eq:Kmatrix}
      K =
      \begin{pmatrix}
	 k_{11} & k_{12} \\
	 k_{21} & k_{22}
      \end{pmatrix}
   \end{equation}
   with $k_{jk}\in \C$ (for the form of these numbers see the corollary after this proof).
   With respect to the standard unit vectors $\e_1, \e_2, \e_3, \e_4$ in $\C^4$, the space $L_K$ can be written as
   \begin{align}
      \nonumber
      L_K &=
      \linspan\left\{
      \begin{pmatrix}
	 1+k_{11}\\
	 \I(1-k_{11})\\
	 -\I k_{21}\\
	 k_{21}
      \end{pmatrix},\
      \begin{pmatrix}
	 k_{12}\\
	 -\I k_{12}\\
	 \I(1-k_{22})\\
	 1+k_{22}
      \end{pmatrix}
      \right\}
      \\[2ex]
      &=
      \label{eq:orthogonal}
      \left(
      \linspan\left\{
      \begin{pmatrix}
	 1 - \overline k_{11}     \\
	 -\I(1+ \overline k_{11})\\
	 -\I \overline k_{12}     \\
	 -\overline k_{12}
      \end{pmatrix},\
      \begin{pmatrix}
	 -\overline k_{21}\\
	 -\I \overline k_{21}\\
	 -\I(1+\overline k_{22})\\
	 1-\overline k_{22}
      \end{pmatrix}
      \right\}
      \right)^\perp
   \end{align}
   where $K=(k_{ij})_{ij=1}^2$ as in \eqref{eq:Kmatrix}.
   From \eqref{eq:orthogonal} it follows that every selfadjoint extension of $\cop$ is of the form \eqref{eq:Kbc}.
\end{proof}

It is well-known that $U(2)$ is parametrised by four real parameters $\phi, \alpha, \beta_1, \beta_2\in\R$: Every $K\in U(2)$ is of the form
\begin{equation}
   \label{eq:K}
   K =
   \e^{\I\phi}
   \begin{pmatrix}
      \e^{\I\beta_1}\sin\alpha & \e^{-\I\beta_2}\cos\alpha\\
      \e^{\I\beta_2}\cos\alpha & -\e^{-\I\beta_1}\sin\alpha
   \end{pmatrix}
\end{equation}
for fixed $\phi, \alpha, \beta_1, \beta_2$.

Therefore the boundary conditions in \eqref{eq:Kbc} can be rewritten as follows:

\begin{corollary}
   Let $K\in U(2)$ as in \eqref{eq:K}.
   Then $f\in \mD(\cop_{K})$ if and only if $f\in \mD(\cop^*)$ and $f$ satisfies the boundary conditions
   \begin{align}
      \begin{aligned}
	 \label{eq:realbc}
	 0 &=
	 \begin{alignedat}[t]{4}
	    &  & (1 - \e^{\I\phi}\e^{\I\beta_1}\sin\alpha )\ &f(-0)
	    &&\ +\ & \I(1+ \e^{\I\phi}\e^{\I\beta_1}\sin\alpha )\ &f'(-0)
	    \\
	    &\phantom{=}+& \I \e^{\I\phi}\e^{-\I\beta_2}\cos\alpha\ &f(+0)
	    &&\ -\ & \e^{\I\phi}\e^{-\I\beta_2}\cos\alpha\ &f'(+0),
	 \end{alignedat}
	 \\[2ex]
	 0 &=
	 \begin{alignedat}[t]{4}
	    && -\e^{\I\phi}\e^{\I\beta_2}\cos\alpha\ &f(-0)
	    &&\ +\ & \I \e^{\I\phi}\e^{\I\beta_2}\cos\alpha\ &f'(-0)
	    \\
	    &\phantom{=}\ +& \I(1 - \e^{\I\phi}\e^{-\I\beta_1}\sin\alpha )\ & f(+0)
	    &&\ +\ & (1+ \e^{\I\phi}\e^{-\I\beta_1}\sin\alpha )\ & f'(+0).
	 \end{alignedat}
      \end{aligned}
   \end{align}
\end{corollary}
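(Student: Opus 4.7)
The plan is to prove this corollary by direct verification: given the parametrisation \eqref{eq:K} of $U(2)$, substitute the explicit matrix entries $k_{jk}$ into the boundary conditions \eqref{eq:Kbc} from the preceding proposition. Since the proposition already establishes that $\mD(\cop_K)$ is characterized by \eqref{eq:Kbc} with $K=(k_{jk})\in U(2)$, there is essentially nothing new to prove; all that remains is a formal substitution.

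First I would read off from \eqref{eq:K} the four matrix entries
\begin{align*}
   k_{11} &= \e^{\I\phi}\e^{\I\beta_1}\sin\alpha, &
   k_{12} &= \e^{\I\phi}\e^{-\I\beta_2}\cos\alpha, \\
   k_{21} &= \e^{\I\phi}\e^{\I\beta_2}\cos\alpha, &
   k_{22} &= -\e^{\I\phi}\e^{-\I\beta_1}\sin\alpha.
\end{align*}
Then I would insert these expressions into the first linear combination of \eqref{eq:Kbc}, namely $(1-k_{11})f(-0)+\I(1+k_{11})f'(-0)+\I k_{12}f(+0)-k_{12}f'(+0)$, and observe that the resulting expression is precisely the first equation of \eqref{eq:realbc}. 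An analogous substitution into the second linear combination $-k_{21}f(-0)+\I k_{21}f'(-0)+\I(1+k_{22})f(+0)+(1-k_{22})f'(+0)$, using $1+k_{22}=1-\e^{\I\phi}\e^{-\I\beta_1}\sin\alpha$ and $1-k_{22}=1+\e^{\I\phi}\e^{-\I\beta_1}\sin\alpha$, yields the second equation of \eqref{eq:realbc}.

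The one preliminary fact I would want to cite (or briefly justify) is that the parametrisation \eqref{eq:K} indeed exhausts $U(2)$. This is standard: writing $K=\e^{\I\phi}K_0$ with $\det K_0=\pm 1$ and then using that any element of $SU(2)$ admits the form $\bigl(\begin{smallmatrix}a&b\\-\overline b&\overline a\end{smallmatrix}\bigr)$ with $|a|^2+|b|^2=1$, one sets $a=\e^{\I\beta_1}\sin\alpha$ and $b=\e^{-\I\beta_2}\cos\alpha$, yielding \eqref{eq:K} after absorbing a sign into $\phi$. Since the preceding proposition already sets up the bijection $K\mapsto \cop_K$ between $U(2)$ and the family of selfadjoint extensions of $\cop$, no separate argument is needed here.

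There is no real obstacle; the entire content of the corollary is bookkeeping to translate the complex-matrix form of the boundary conditions into an explicit real four-parameter form. If anything were to require care, it would be the slightly unusual sign of $k_{22}$ in \eqref{eq:K}, which must be tracked faithfully when simplifying $1\pm k_{22}$ in the second boundary condition; but this is a single sign check rather than a substantive difficulty.
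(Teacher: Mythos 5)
Your proposal is correct and matches the paper's treatment: the paper derives the corollary simply by substituting the standard four-parameter form \eqref{eq:K} of a matrix in $U(2)$ into the boundary conditions \eqref{eq:Kbc} of the preceding proposition, exactly as you do (including the sign in $1\pm k_{22}$). Your brief justification that \eqref{eq:K} exhausts $U(2)$ is an optional extra the paper simply cites as well-known.
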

\medskip

In the following subsections we discuss particular choices of $K$.

\subsection{Classical harmonic oscillator} 
\label{subsec:C:classical}
Let
$K = \begin{pmatrix} 0 & -1 \\ -1 & 0
\end{pmatrix}$.
For instance, we can choose $\alpha = \beta_1=\beta_2=0$, $\phi = \pi$.
Then the boundary conditions \eqref{eq:Kbc}
reduce to
\begin{align*}
   f(-0) = f(+0) \quad\text{and}\quad f'(-0) = f'(+0).
\end{align*}
Hence $\cop_{K}=\dop$ is the classical harmonic oscillator.

\subsection{Boundary conditions such that $\cop_{K}=\Bop_{\theta}$} 
\label{subsec:C:delta}

Let
$\beta_1 = \beta_2 = 0$,
$\alpha\in(0,\pi)$,
$\phi = \alpha+\pi/2$.
Then
\begin{equation*}
K = \I\e^{\I\alpha}
\begin{pmatrix} \sin\alpha & \cos\alpha \\ \cos\alpha & -\sin\alpha
\end{pmatrix}.
\end{equation*}
and the boundary conditions \eqref{eq:realbc} become
\begin{align}
   \label{eq:deltaBC}
   \begin{aligned}
   0 &=
   \left( 1 - \I\e^{\I\alpha}\sin\alpha \right) f(-0)
   + \I \left(1 + \I\e^{\I\alpha}\sin\alpha \right) f'(-0)
   \\
   &\phantom{=} - \e^{\I\alpha}\cos\alpha \Big( f(+0) + \I f'(+0) \Big),
   \\[1ex]
   0 &=
   -\I\e^{\I\alpha}\cos\alpha \Big( f(-0) - \I f'(-0) \Big)
   \\
   &\phantom{=}+\I \left( 1 - \I\e^{\I\alpha}\sin\alpha \right) f(+0)
   +  \left( 1 + \I\e^{\I\alpha}\sin\alpha \right) f'(+0),
   \end{aligned}
\end{align}
which, for $\alpha\neq \pi/2$ is true if and only if
\begin{equation}
   \label{eq:BCdelta}
   f(-0) = f(+0) =: f(0)
   \quad\text{and}\quad f'(-0) - f'(+0) = 2\tan\alpha\, f(0).
\end{equation}
Choose $\theta\in[0,\pi)$ such that $\cot\theta = -\sqrt{2} \tan\alpha$.
Then $\cop_{K}=\Bop_{\theta}$ with $K$ as above.
For $\alpha = \pi/2$, the conditions \eqref{eq:deltaBC} are equivalent to
$f(+0)=f(-0) = 0$.


\subsection{Boundary conditions with continuous derivative} 
\label{subsec:C:deltaprime}

Let $\alpha\in (0, \pi)$ and let $\beta_1=\beta_2=0$, $\phi=\pi/2 - \alpha$.
Note that $\e^{\I\phi}=\I\e^{-\I\alpha}$ and
\begin{equation*}
K = \I\e^{-\I\alpha}
\begin{pmatrix} \sin\alpha & \cos\alpha \\ \cos\alpha & -\sin\alpha
\end{pmatrix}.
\end{equation*}
Then the equations (\ref{eq:realbc}) become
\begin{align}
   \begin{aligned}
      \label{eq:deltaprime}
      0  & =  \e^{-\I\alpha} \cos\alpha\ f(-0)
      + \I(1+ \I\e^{-\I\alpha}\sin\alpha)\ f'(-0)
      \\
      &\phantom{=\ } -\e^{-\I\alpha} \cos\alpha  \Big(f(+0) + \I f'(+0) \Big)
      \\[2ex]
      0 & =
      -\e^{-\I\alpha}\cos\alpha \Big(\I f(-0) + f'(-0) \Big)
      \\
      &\phantom{=\ } + \I\e^{-\I\alpha}\cos\alpha\ f(+0)
      + (1+ \I\e^{-\I\alpha}\sin\alpha) f'(+0).
   \end{aligned}
\end{align}
If $\alpha\neq \pi/2$, then \eqref{eq:deltaprime} is equivalent to
\begin{equation}
   \label{eq:bcdeltaprime}
   f^{\prime}(-0)=f^{\prime }(+0)=:f^{\prime}(0)
   \quad\text{and}\quad
   f(+0)-f(-0)= -2\tan\alpha\, f^{\prime}(0).
\end{equation}
If $\alpha = \pi/2$, then \eqref{eq:deltaprime} is equivalent to
$f^{\prime}(-0)=f^{\prime }(+0)= 0$.



\section{Interpretation as $\delta$- and $\delta'$-potentials} 
\label{sec:delta}

Observe that the operator $\dop$ from \eqref{eq:A} is closed. Hence the set $H_+:= \mD(\dop)$ becomes a Hilbert space with the norm
\begin{align*}
   \|f\|_+ := \|f\|_\dop := \left( \|f\|^2 + \|\dop f\|^2 \right)^{\frac{1}{2}},
   \qquad f\in H_+.
\end{align*}
Let $H_0:= L_2(\R)$.
In addition to the usual norm on $H$, we define
\begin{equation*}
   \|f\|_- := \sup\{ | \langle f,g\rangle | : g\in H_+, \|g\|_+\le 1 \},
   \qquad f\in H_0,
\end{equation*}
and we define $H_-$ to be the closure of $H_0$ with respect to the norm $\|\cdot\|_-$.
Then $(H_-, \|\cdot\|_-)$ is a Hilbert space and it can be viewed as the dual space of $H_+$.
Observe that we have the continuous inclusions
\begin{equation*}
   H_+ \subset H_0 \subset H_-.
\end{equation*}
On says that $H_0$ is \emph{rigged} by $H_+$ and $H_-$, see, for instance, \cite{UsShB2}, Chapter 14.

If $T:H_+\to H_0$ is a bounded linear operator, then we define its adjoint operator $T^*:H_0\to H_-$ as the unique bounded linear operator that satisfies
\begin{equation*}
   \langle Tf, g\rangle = \langle f, T^*g\rangle,
   \qquad f\in H_+,\, g\in H_0,
\end{equation*}
where $\langle\cdot\,,\,\cdot\rangle$ denotes the inner product on $H_0$.

Let us define the functions
\begin{align*}
   w_1(t)= \begin{cases}
      v(t,0), & \text{ if $t>0$,}\\
      v(-t,0),& \text{ if $t<0$,}
   \end{cases}
   \qquad
   w_2(t)= \begin{cases}
      v(t,0), & \text{ if $t>0$,}\\
      -v(-t,0),& \text{ if $t<0$,}
   \end{cases}
\end{align*}
with $v$ as in \eqref{def_v}.
Clearly $w_1, w_2\in H_0\subset H_-$.
Observe that
\begin{align}
\label{eq:w}
&w_1(+0) = w_1(-0) = w_2(+0) = -w_2(-0)=v(0,0) = G(0)
\\
\text{and}\quad
&w_1'(+0) = -w_1'(-0) = w_2'(+0) = w_2'(-0)=v'(0,0) = -1.
\end{align}

\begin{lemma}\label{lem:delta}
   The linear functionals
   \begin{align*}
      \delta &: H_+\to\C,\quad \delta f = f(0),\\
      \delta'&: H_+\to\C,\quad \delta' f = f'(0)
   \end{align*}
   are bounded and
   \begin{align*}
      \delta f = \frac{1}{2} \langle \dop f, w_1\rangle,
      \quad
      \delta' f = \frac{1}{2G(0)} \langle \dop f, w_2\rangle,
      \qquad
      f\in H_+.
   \end{align*}
\end{lemma}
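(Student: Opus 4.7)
The plan is to establish both identities by a direct integration-by-parts computation on each half-line, exploiting the fact that $w_1$ and $w_2$ were built so that $\FA w_i=0$ pointwise on $\R\setminus\{0\}$.

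First I would observe that $w_1|_{\R_\pm}$ and $w_2|_{\R_\pm}$ lie in $\mD(\dop_\pm^{\max})$: the square-integrability on the half-lines was already proved inside the argument for Lemma~\ref{lemma:negEVA} (via the bound $v(t,0)\le \int_t^\infty (\cosh(s^2/3))^{-1}\rd s$), and the pointwise equation $\FA v(\cdot,0)=0$ is the defining property of $v$. Any $f\in H_+$ also satisfies $f|_{\R_\pm}\in\mD(\dop_\pm^{\max})$, and, being in $\mD(\dop)$, has $f$ and $f'$ continuous across $0$.

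Next I would apply Green's identity
\begin{equation*}
\int_a^b \bigl[(\FA f)\overline u - f\overline{\FA u}\bigr]\rd t
= -\tfrac{1}{2}\bigl[f'\overline u - f\overline{u'}\bigr]_a^b
\end{equation*}
separately on $(-\infty,0)$ and on $(0,\infty)$ with $u=w_i$. The bulk integrals vanish because $\FA w_i=0$ on each half-line, and the boundary terms at $\pm\infty$ vanish because $\FA$ is in the limit point case there, so the Wronskian bracket $[f,w_i]_{\pm\infty}=0$ for any pair of elements in $\mD(\dop_\pm^{\max})$. Only the contributions at $0^\pm$ survive, giving an explicit linear combination of $f(\pm 0)$, $f'(\pm 0)$, $w_i(\pm 0)$, $w_i'(\pm 0)$. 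Substituting the boundary values from \eqref{eq:w} and using continuity of $f, f'$ across $0$, one sees that for $w_1$ the $f'$-terms cancel (since $w_1'$ jumps while $w_1$ is continuous), leaving a multiple of $f(0)$; for $w_2$ the $f$-terms cancel (since $w_2$ jumps while $w_2'$ is continuous), leaving a multiple of $f'(0)$ with the prefactor $G(0)$ that appears in the statement.

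Boundedness of $\delta$ and $\delta'$ on $H_+$ is then an immediate consequence of the representation together with the Cauchy-Schwarz inequality: $|\delta f|\le \tfrac{1}{2}\|w_1\|\,\|\dop f\|\le \tfrac{1}{2}\|w_1\|\,\|f\|_+$, and analogously for $\delta'$. The only point that requires genuine care is the vanishing of boundary terms at $\pm\infty$, which is cleanest to handle by invoking the limit point property of $\FA$ noted after \eqref{eq:assymptotic} rather than by an ad-hoc decay estimate; the remainder of the calculation is routine algebra with the values in \eqref{eq:w}.
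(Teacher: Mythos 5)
Your proposal is correct and follows essentially the same route as the paper's proof: Green's identity on each half-line, the bulk terms vanishing because $\FA w_i=0$ there, the boundary data \eqref{eq:w} isolating $f(0)$ resp.\ $f'(0)$, and Cauchy--Schwarz for boundedness; your explicit appeal to the limit point property to kill the brackets at $\pm\infty$ and the check that $w_i|_{\R_\pm}\in\mD(\dop_\pm^{\max})$ merely make precise what the paper leaves implicit. One caveat for the ``routine algebra'': if you keep the factor $\tfrac12$ in your Green's identity, the boundary values \eqref{eq:w} give $\langle \dop f, w_1\rangle = f(0)$ and $\langle \dop f, w_2\rangle = G(0)\,f'(0)$, i.e.\ constants differing by a factor $2$ from the statement; the prefactors $\tfrac12$ and $\tfrac{1}{2G(0)}$ in the lemma (and in the paper's own computation) correspond to writing $\dop f=-f''+t^2f$ without the $\tfrac12$ of \eqref{eq:A}, so be aware of that normalisation mismatch when you match constants.
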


\begin{proof}
   Note that for any $f\in H_+=\mD(\dop)$ and $j=1,2$, we have, using integration by parts twice,
   \begin{align*}
      \langle \dop f, w_j\rangle
      &= \int_{-\infty}^{+\infty}(\dop f)(t)\cdot w_j(t) \,\rd t
      \\
      &= \int_{-\infty}^0(-f^{\prime\prime}(t)+t^2f(t))w_j(t)\, \rd t\
      +
      \int_0^{+\infty}(-f^{\prime\prime}(t)+t^2f(t))w_j(t)\, \rd t\
      \\
      &= \int_{-\infty}^0f(t)(-w_j^{\prime\prime}(t)+t^2w_j(t))\, \rd t\
      +
      \int_0^{+\infty} f(t) (-w_j^{\prime\prime}(t)+t^2w_j(t))\, \rd t\
      \\
      &\phantom{=} + f^{\prime}(0)\{ w_j(+0)-w_j(-0)\} + f(0)\{w_j^{\prime}(-0)-w_j^{\prime}(+0)\}
      \\[1ex]
      &=
      f^{\prime}(0)\{ w_j(+0)-w_j(-0)\} +  f(0)\{w_j^{\prime}(-0)-w_j^{\prime}(+0)\},
   \end{align*}
   so the second claim follows from \eqref{eq:w}.
   Now the boundedness of $\delta$ and $\delta'$ is clear, because
   $|\delta f| = |\frac{1}{2} \langle \dop f, w_1 \rangle|
   \le \frac{1}{2} \| \dop f \|\, \|w_1\|
   \le \frac{1}{2} \| f \|_+ \|w_1\|$,
   and analogously
   $|\delta' f| \le \frac{1}{2G(0)} \| f \|_+ \|w_1\|$.
\end{proof}

Recall that in our case, $H_+\subset \mD(\dop_0^*) \subset H_0 \subset H_-$.
By definition of $H_+$, the operator
\begin{align*}
   \widetilde \dop: H_+\to H_0,\qquad
   \widetilde \dop f = \dop f
\end{align*}
is bounded.
Let us calculate how $\widetilde \dop^*$ acts on elements $g\in \mD(\dop_0^*)$.
As in the proof of Lemma~\ref{lem:delta}, integration by parts gives for $f\in H_+$
\begin{align*}
   \langle \dop f,g \rangle
    &=
   \{ \overline{g}(+0)-\overline{g}(-0)\}\, f'(0)
   + \{\overline{g}^{\prime}(-0)-\overline{g}^{\prime}(+0)\}\, f(0)
   + \langle f,\dop_0^*g \rangle
   \\
   &=\{ \overline{g}(+0)-\overline{g}(-0)\}\, \frac{1}{2G(0)}  \langle \dop f, w_2\rangle
   + \{\overline{g}^{\prime}(-0)-\overline{g}^{\prime}(-0)\}\, \frac{1}{2} \langle \dop f, w_1 \rangle
   \\
   &\phantom{=}
   + \langle f,\dop_0^*g \rangle
   \\
   &=\{ \overline{g}(+0)-\overline{g}(-0)\}\, \frac{1}{2G(0)}  \langle f, \widetilde\dop^* w_2\rangle
   + \{\overline{g}^{\prime}(-0)-\overline{g}^{\prime}(+0)\}\, \frac{1}{2} \langle f, \widetilde\dop^* w_1\rangle
   \\
   &\phantom{=}
   + \langle f,\dop_0^*g \rangle.
\end{align*}
So by Lemma~\ref{lem:delta}, we obtain
\begin{align*}
   \widetilde \dop^* g
   = \frac{ \overline{g}(+0)-\overline{g}(-0)}{2G(0)} \widetilde\dop^* w_2
   + \frac{\overline{g'}(-0)-\overline{g'}(+0)}{2} \widetilde\dop^* w_1
   + \dop_0^*g,
\end{align*}
or, if we identify $H_-$ and $(H_+)'$,
\begin{align*}
   \widetilde \dop^* g
   = \{ \overline{g}(+0)-\overline{g}(-0) \} \delta'
   - \{\overline{g'}(+0)-\overline{g'}(-0) \} \delta
   + \dop_0^*g
   \ \in (H_+)'.
\end{align*}

Hence $\dop_0^*$ can be seen as a perturbation of $\widetilde\dop^*$:
\begin{align}
   \label{eq:AA}
   \dop_0^*g
   =
   \widetilde \dop^* g
   -\{ \overline{g}(+0)-\overline{g}(-0) \} \delta'
   + \{\overline{g'}(+0)-\overline{g'}(-0) \} \delta
   \ \in (H_+)'
\end{align}
for $g\in\mD(\dop_0^*)$.
Recall that the operators $\Bop_{\theta}$ from Section~\ref{sec:B} and $\cop_{K}$ from Section~\ref{sec:C} satisfy
$\Bop\subset\Bop_{\theta}\subset\dop_0^*$
and
$\cop\subset\cop_{K}\subset\dop_0^*$.
So we obtain the following:

\begin{itemize}

   \item
   Any function $g\in\mD(\cop_{K})$ with $K$ as in Subsection~\ref{subsec:C:classical} satisfies
   $g(-0) = g(+0)$ and $g'(-0) = g'(+0)$, hence
   \begin{equation*}
      \cop_{K}g = \widetilde A^* g
      = \dop_0^* g.
   \end{equation*}

   \item
   Any function $g\in\mD(\cop_{K})$ with $K$ as in Subsection~\ref{subsec:C:delta}
   and $\alpha\neq \pi/2$ satisfies
   $g(-0) = g(+0)$ and $g'(-0) - g'(+0) = 2\tan\alpha\ g(0)$, hence
   \begin{equation*}
      \cop_{K}g
      = \dop_0^* g
      = \widetilde\dop^* g - 2\tan\alpha\ g(0) \delta.
   \end{equation*}
   If we take $\theta$ such that $\cot\theta=-\sqrt{2}\tan\alpha$, we obtain
   \begin{equation*}
      \Bop_{\theta}g = \cop_{K}g
      = \sqrt{2}\cot\theta\ g(0) \delta +\widetilde\dop^* g.
   \end{equation*}
   Note that $\Bop_{\theta}$ as exactly one negative eigenvalue if $\theta\in(\pi/2+\alpha_A,\,\pi)$ and this eigenvalue decreases monotonically to $-\infty$ as $\theta\to\pi$, that is $\sqrt{2}\cot(\theta)\to-\infty$.

   \item
   Any function $g\in\mD(\cop_{K})$ with $K$ as in Subsection~\ref{subsec:C:deltaprime} satisfies
   $g'(-0)=g'(+0)$ and
   $g(+0)-g(-0)= -2\tan\alpha\ g'(0)$.
   Hence, for $\alpha\neq \pi/2$
   \begin{equation*}
      \cop_{K}g = \dop_0^* g
      = \widetilde\dop^* g + 2\tan\alpha\ g'(0) \delta'.
   \end{equation*}
\end{itemize}

\bigskip
\bigskip

\noindent
{\bf Acknowledgment.}
We are grateful to the
Fondo de Investigaciones de la Facultad de Ciencias de la Universidad de los Andes, Convocatoria 2014-1 para la Financiaci\'on de proyectos de Investigaci\'on Categor\'ia: Profesores De Planta, proyecto 
``Operadores lineales en espacios con product interno indefinido'',
for its financial support.

\bibliographystyle{alpha}
\bibliography{lit}

\end{document}